\newcommand{\scal}{\textup{scal}}
\newcommand{\U}{\mathscr{U}}
\newcommand{\xt}{\widetilde{x}}
\DeclareMathOperator{\ho}{\mathcal{C}^{\A}_{\textup{ie}}}
\DeclareMathOperator{\hok}{\mathcal{C}^{k, \A}_{\textup{ie}}}
\numberwithin{equation}{section}
\begin{document}

\title[Ricci de Turck flow on singular manifolds]
{Ricci de Turck flow on singular manifolds}

\author{Boris Vertman}
\address{Mathematisches Institut,
Universit\"at Oldenburg,
26129 Oldenburg,
Germany}
\email{boris.vertman@uni-oldenburg.de}

\subjclass[2000]{53C44; 58J35; 35K08}
\date{\today}

\begin{abstract}
{In this paper we prove local existence of a Ricci de Turck flow starting at a space
with incomplete edge singularities and flowing for a short time 
within a class of incomplete edge manifolds. We derive 
regularity properties for the corresponding family of Riemannian metrics and 
discuss boundedness of the Ricci curvature along the flow. 
For Riemannian metrics that are sufficiently close to a flat incomplete edge metric,
we prove long time existence of the Ricci de Turck flow. Under certain conditions, 
our results yield existence of Ricci flow on spaces with incomplete edge singularities. The proof works 
by a careful analysis of the Lichnerowicz Laplacian and the Ricci de Turck flow equation.}
\end{abstract}

\maketitle
\tableofcontents

\section{Introduction and statement of the main result}

Geometric flows have attracted considerable interest and have been in the focus of extensive research in recent years,
among all most notably the Ricci flow which provided the decisive tool in the proof of Thurston's geometrization 
and the Poincare conjectures. In the present discussion we are interested in the Ricci flow $g(t)$ of an incomplete
manifold $(M,g_0)$ with an incomplete edge singular Riemannian metric $g_0$ satisfying the Ricci flow equation
\begin{equation}
\partial_t g(t) = -2 \, \textup{Ric} (g(t)), \quad g(0) = g_0.
\end{equation}
Such singular Ricci flows, which stay in a class of singular spaces, have been considered on 
K\"ahler manifolds in connection to a recent resolution of the Calabi-Yau conjecture for K\"ahler edge spaces 
by Jeffres, Mazzeo and Rubinstein \cite{JMR}. That paper arose in connection to the recent resolution of the 
Tian-Yau-Donaldson conjecture by Chen, Donaldson and Sun in \cite{Don1, Don2, Don3} and Tian \cite{Tian}. 
We also refer the reader to the survey by Rubinstein \cite{Rubin} on the background of the two conjectures.
In related very interesting developments, Chen and Wang \cite{Wang1}, Wang \cite{Wang2}, Liu and Zhang 
\cite{LZ} study existence and various properties of the conical K\"ahler Ricci flow. 
\medskip

In two dimensions, Ricci flow reduces to the Yamabe flow
and has been studied  by Mazzeo, Rubinstein and Sesum in \cite{MRS} and 
Yin \cite{Yin:RFO}. Yamabe flow of singular
edge manifolds in general dimension has been studied by the author in a joint work with Bahuaud in \cite{BV}. 
In the subsequent paper \cite{BV2} we study the long time behaviour of Yamabe flow of edge manifolds and solve
the Yamabe problem for incomplete edge metrics with a negative Yamabe invariant. Yamabe problem 
using elliptic methods has been studied by Akutagawa and Botvinnik in \cite{AkutagawaBotvinnik}
in case of isolated conical singularities, as well as by Akutagawa, Carron and Mazzeo in \cite{ACM} on edge manifolds.
\medskip

In the singular setting, Ricci flow need not be unique and 
alternatively to our treatment, Giesen-Topping \cite{Topping, Topping2}
obtained a solution to the Ricci flow on surfaces starting at a singular metric that becomes instantaneously complete. 
Moreover, Simon \cite{MS} studied Ricci flow in dimension two and three, where the singularity is smoothed out for positive times.  
\medskip

The setting of singular edge manifolds of dimension higher than two, which are not necessarily K\"ahler, is 
complicated since the Ricci flow equation does not reduce to a scalar equation and one 
is forced to study an equation of tensors. The present paper provides a first step into this direction and 
establishes short time existence of Ricci de Turck flow starting at and preserving a class of incomplete edge 
metrics. We point out that our analysis in particular applies to the setting
of isolated conical singularities. \medskip

We now proceed with an introduction into basic geometry of incomplete edge spaces,
definition of H\"older spaces on incomplete edge spaces,
outline the basic argument for short-time existence of the Ricci de Turck flow and 
formulation of the main results.

\subsection{Incomplete edge singularities}

\begin{defn}\label{admissible}
Consider an open interior $M$ of a compact manifold $\overline{M}$ with boundary $\partial M$. 
Let $\U = (0,1)_x \times \partial M$ be a tubular neighborhood
of the boundary in $M$ with the radial function $x:\U \to (0,1)$. 
Assume $\partial M$ is the total space of a fibration 
$\phi: \partial M \to B$ with the base $B$ and fibre $F$ being compact 
smooth manifolds, $\dim F \geq 1$. Consider a smooth Riemannian 
metric $g_B$ on the base manifold $B$ and a symmetric 
$2$-tensor $g_F$ on $\partial M$ which restricts to a fixed\footnote{In fact, 
the condition that $g_F$ restricts to a fixed metric on fibres is not necessary and is imposed here for simplicity. 
Either one only assumes that the metrics on the fibres are isospectral with respect to the tangential operator
of the Lichnerowicz Laplacian, or more generally one has to deal with a heat kernel that is only partially polyhomogeneous,
see Remark \ref{isospectrality} below.} Riemannian metric
on the fibres. We write $g_F$ for the Riemannian metrics on fibres as well. An incomplete edge 
metric $g$ on $M$ is defined here to be a smooth Riemannian 
metric such that $g=\overline{g}+h$ with $|h|_{\overline{g}}=O(x)$ and 

\begin{equation*}
\overline{g} \mid_{\, \U} = dx^2 + x^2 g_F + \phi^* g_B.
\end{equation*}
\end{defn}

The singular neighborhood $\U$ of such an incomplete edge space $M$
is illustrated in Figure \ref{figure1}. If $\dim B = 0$, the "edge"
reduces to a finite collection of isolated conical singularities.

\begin{figure}[h]
	\includegraphics[scale=0.75]{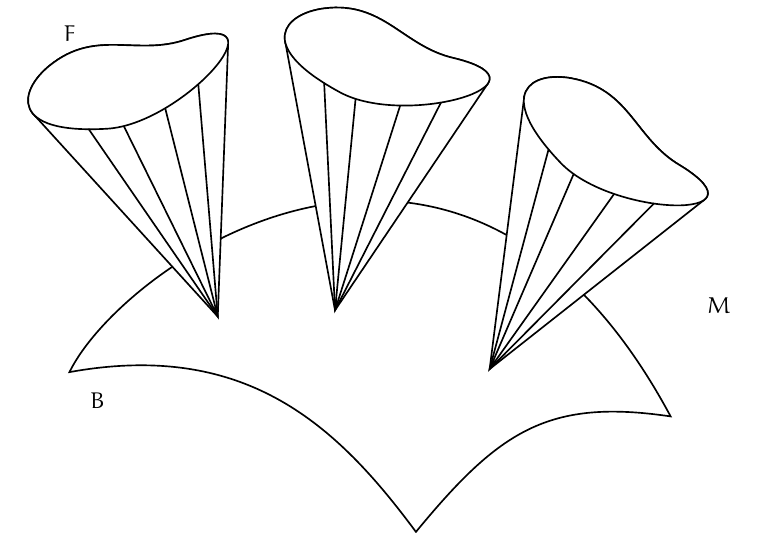}
	\caption{Incomplete Edge as a Cone bundle over $B$.}
	\label{figure1}
\end{figure}

We call such an edge metric \emph{admissible} if the fibration $\phi: (\partial M, g_F + \phi^*g_B) 
\to (B, g_B)$ is a Riemannian submersion. More precisely, we may split the tangent bundle $T_p\partial M$ 
into vertical and horizontal subspaces $T^V_p \partial M \oplus T^H_p \partial M$ as follows.
The vertical subspace $T^V_p\partial M$ is the tangent space to the fibre of $\phi$ through $p$, and the 
horizontal subspace $T^H_p \partial M$ is the annihilator 
of the subbundle $T^V_p\partial M \lrcorner g^F \subset T^*\partial M$ ($\lrcorner$ denotes contraction).  
Then $\phi$ is a Riemannian submersion if $g_F$ restricted to $T^H_p \partial M$ vanishes. 
Any level set $(\{x\}\times \partial M, x^2 g_F + \phi^*g_B)$ is then a Riemannian submersion as well.
\medskip

Other conditions on the metric $g$ will be added below and are related to the 
assumption of in a certain sense bounded curvature as well as the spectral analysis of the associated 
Laplace Beltrami and the Lichnerowicz Laplace operators.

\subsection{Geometry of incomplete edge spaces}\medskip

Choose local coordinates in the singular neighborhood $\U$ as follows. 
Consider local coordinates $(y)$ on $B$, lifted to $\partial M$ with respect to $\phi$, and then extended radially to $U$. 
Let coordinates $(z)$ restrict to local coordinates on fibres $F$. This defines
local coordinates $(x,y,z)$ in the neighborhood $\U$. \medskip

Consider the Lie algebra of edge vector fields $\mathcal{V}_e$, which by definition are smooth 
over $\overline{M}$ and at the boundary $\partial M$ tangent to the fibres of the fibration. 
In local coordinates, $\mathcal{V}_e$ is locally generated by (we write $b := \dim B$ and $f := \dim F$)
\[
\left\{x\frac{\partial}{\partial x}, x\partial_y = \left( x\frac{\partial}{\partial y_1}, \dots, x \frac{\partial}{\partial y_b}\right), 
\partial_z = \left( \frac{\partial}{\partial z_1},\dots, \frac{\partial}{\partial z_f} \right)\right\},
\]
with coefficients in the linear combinations of the derivatives being by definition smooth 
on $\overline{M}$. The vector bundle ${}^eTM$ over 
$\overline{M}$ is defined by requiring that the edge vector fields $\mathcal{V}_e$ 
form a spanning set of sections $\mathcal{V}_e=C^\infty(\overline{M},{}^eTM)$. The dual 
vector bundle of ${}^eTM$ is denoted by ${}^eT^*M$ and is generated locally by the following one-forms
\begin{align}\label{triv}
\left\{\frac{dx}{x}, \frac{dy_1}{x}, \dots, \frac{dy_b}{x}, dz_1,\dots,dz_f\right\}.
\end{align}
These differential one-forms, though singular in the usual sense, are smooth as sections of ${}^eT^*M$. 
We extend the radial function $x:\U \to (0,1)$ smoothly to $x \in C^\infty (\overline{M}, [0,\infty))$ such that $x^{-1}(\{0\})= \partial M$ and 
$dx \restriction \partial M \neq 0$. We define the vector bundle ${}^{ie}TM$ 
by asking\footnote{We write $x \, C^\infty(\overline{M},{}^{ie}TM) := 
\{x \cdot u \mid u \in C^\infty(\overline{M},{}^{ie}TM)\}$.} 
$x C^\infty(\overline{M},{}^{ie}TM) = C^\infty(\overline{M},{}^{e}TM)$. 
Its dual, the vector bundle ${}^{ie}T^*M$, is related to ${}^{e}T^*M$ by
\footnote{We write $x \, C^\infty(\overline{M},{}^{e}T^*M) 
:= \{x \cdot u \mid u \in C^\infty(\overline{M},{}^{e}T^*M)\}$.}  $C^\infty(\overline{M},{}^{ie}T^*M) = 
x C^\infty(\overline{M},{}^{e}T^*M)$, and is spanned locally by 
\begin{align}\label{triv2}
\left\{dx, dy_1, \dots, dy_b, x dz_1,\dots, x dz_f\right\}.
\end{align}
Construction of these vector bundles does not require a choice of a Riemannian metric on $M$.
Rather the vector bundles ${}^{ie}T^*M$ and ${}^{ie}T^*M$ allow us to express the structure of 
the complete edge metric $x^{-2} g$ and the incomplete edge metric $g$, as well as the corresponding curvatures in a convenient way. \medskip

The complete edge metric $x^{-2}g$ can be viewed as a smooth section of
the symmetric $2$-tensors on ${}^{e}T^*M$, which we write as $x^{-2}g \in \textup{Sym}^2({}^{e}T^*M)$. 
Therefore we refer to ${}^{e}TM$ and ${}^{e}T^*M$ as the complete tangent and cotangent bundles,
respectively. \medskip
 
The incomplete edge metric $g$ can be viewed as a smooth section of
the symmetric $2$-tensors on ${}^{ie}T^*M$, which we write as $g \in \textup{Sym}^2({}^{ie}T^*M)$. 
Therefore we refer to ${}^{ie}TM$ and ${}^{ie}T^*M$ as the incomplete tangent and cotangent bundles,
respectively. We adopt such a convention of incomplete Riemannian edge metrics viewed as 
sections of $\textup{Sym}^2({}^{ie}T^*M)$ 
from now whenever we don't say otherwise. Note also that the generators of ${}^{ie}TM$ and ${}^{ie}T^*M$
are of bounded length with respect to the Riemannian metric $g$ and its inverse, respectively. \medskip

The Riemannian curvature $(0,4)$ tensor $R(g)$ acting on 
$X_1,X_2, X_3, X_4 \in C^\infty(\overline{M},{}^{ie}TM)$ is generically 
$R(g) [X_1,X_2, X_3, X_4] \in x^{-2} C^\infty(\overline{M}) \equiv \{x^{-2} \cdot u \mid 
u \in C^\infty(\overline{M})\}$. We say in short that $R(g)$ acting on ${}^{ie}TM$ is generically
of order $O(x^{-2})$ as $x\to 0$. Similarly, the Ricci curvature tensor $\textup{Ric}(g)$ acting on ${}^{ie}TM$, 
as well as the scalar curvature $\scal(g)$, are generically of order $O(x^{-2})$ as $x\to 0$. However, there 
are geometrically interesting situations, where the Ricci curvature tensor on ${}^{ie}TM$
is bounded up to $x=0$. \medskip

First of all, there is of course the example of a flat cone over $\mathbb{S}^f$.
A second less trivial example is the case of a codimension two singularity, where 
the normal bundle $NB$ of $B$ inside $TM$ is a fibre bundle over $B$ with the fibre
being a two-dimensional disc $\mathbb{D}^2$. The involution on $\mathbb{D}^2$ defines a global
action $\sigma$ on the normal bundle $NB$, which may now be viewed as a branched covering of itself. 
Any $\sigma$-invariant smooth metric on $NB$ descends to a singular edge metric on $NB/_\sigma$
and extends smoothly to $M$. This defines an orbifold metric with incomplete edge singularity and
bounded geometry. In a more general setting, any singularity covered by a smooth branched
covering space admits a singular metric of bounded Ricci curvature. \medskip

The previous paragraph provides two explicit examples of spaces, which have bounded Ricci curvature despite having isolated conical or edge singularities. In both cases the singularity arises as an orbifold singularity. Another class of examples for singular spaces with bounded Ricci curvature has been provided by Hein and Sun \cite{HeSu}, who constructed the first examples of compact Ricci flat manifolds with non-orbifold isolated conical singularities. \medskip

Another quite explicit example is the 
case of a knot $\mathbb{S}^1$ embedded into $\mathbb{S}^3$ or any other orientable $3$-manifold.
The normal bundle of $\mathbb{S}^1$ may be equipped with an edge metric of any given angle.
The fibres of the normal bundle are flat two-dimensional cones and the resulting metric, smoothly
extended away from the singularity, is of bounded geometry. \medskip

Let us point out the assumption of a bounded geometry
is obviously satisfied in the geometric setting of $g\!\!\mid_{\U}$ 
being a higher order perturbation of a Ricci-flat incomplete edge metric. 

\subsection{H\"older spaces on singular manifolds} \medskip

\begin{defn}\label{hoelder-A}
The H\"older space $\ho(M\times [0,T]), \A\in (0,1),$ consists of functions 
$u(p,t)$ that are continuous on $\overline{M} \times [0,T]$ with finite $\A$-th H\"older 
norm\footnote{Finiteness of the H\"older norm $\|u\|_{\A}$ in particular implies that $u$ is continuous on the 
closure $\overline{M}$ up to the edge singularity, and the supremum may be taken 
over $(p,p',t) \in \overline{M}^2 \times [0,T]$.}
\begin{align}\label{norm-def}
\|u\|_{\A}:=\|u\|_{\infty} + \sup \left(\frac{|u(p,t)-u(p',t')|}{d_M(p,p')^{\A}+
|t-t'|^{\frac{\A}{2}}}\right) <\infty, 
\end{align}
where the distance function $d_M(p,p')$ between any two points $p,p'\in M$ 
is defined with respect to the incomplete edge metric $g$, and in terms of the local coordinates 
$(x,y,z)$ in the singular neighborhood $\U$ given equivalently by
\begin{align*}
d_M((x,y,z), (x',y',z'))=\left(|x-x'|^2+(x+x')^2|z-z'|^2 + |y-y'|^2\right)^{\frac{1}{2}}.
\end{align*}
The supremum is taken over all $(p,p',t) \in M^2 \times [0,T]$. We also introduce 
the H\"older space of time-independent functions (and suppress $[0,T]$ in the notation)
\begin{align}
\ho(M):= \{u \in \ho(M\times [0,T]) \mid u(\cdot, t) \ \textup{is independent of} \ t\in [0,T]\}.
\end{align}
\end{defn} 

We wish to explain in what way the H\"older space $\ho$ introduced above, may be defined locally.
Consider any finite cover $\{U_i\}_{i\in I}$ of $\overline{M}$ by open coordinate charts
and a partition of unity $\{\phi_j\}_{j\in J}$ subordinate to that cover. We can define 
a H\"older norm by 
\begin{align}\label{partition-hoelder}
\|u\|^{\phi}_{\A}:=\sum_{j\in J} \| \phi_j u \|_{\A}.
\end{align}
Such a norm is equivalent to our original H\"older norm, since for any 
tuple $(p,p') \in M^2$ with distance $d_M(p,p')>\delta$ bounded 
away from zero, the quotient in the second summand of the formula 
\eqref{norm-def} is bounded by $2\delta^{-1}\|u\|_{\infty}$. Consequently,
we may assume without loss of generality that the tuples $(p,p') \in M^2$ are always taken from 
within the same coordinate patch of a given atlas. \medskip

We also need a notion of H\"older spaces with values in the  
vector bundle $S=\textup{Sym}^2({}^{ie}T^*M)$ of symmetric $2$-tensors.
with a fibrewise inner product $g_S$, induced by the Riemannian metric $g$.

\begin{defn}\label{S-0-hoelder}
Denote by $h$ a fibrewise inner product on 
$S=\textup{Sym}^2({}^{ie}T^*M)$ induced by the Riemannian metric $g$.
The H\"older space $\ho (M\times [0,T], S)$ consists of all sections $\w$ of
$S$ which are continuous on $\overline{M} \times [0,T]$, 
such that for any local orthonormal frame 
$\{s_j\}$ of $S$, the scalar functions $g_S(\w,s_j)$ are $\ho (M\times [0,T])$.
\medskip

The $\A$-th H\"older norm of $\w$ is defined using a partition of unity
$\{\phi_j\}_{j\in J}$ subordinate to a cover of local trivializations of $S$, with a 
local orthonormal frame $\{s_{jk}\}$ over $\supp (\phi_j)$ for each $j\in J$. We put
\begin{align}\label{partition-hoelder-2}
\|\w\|^{(\phi, s)}_{\A}:=\sum_{j\in J} \sum_{k} \| g_S(\phi_j \w,s_{jk}) \|_{\A}.
\end{align}
\end{defn}

As before in \eqref{partition-hoelder}, norms corresponding to different choices of $(\{\phi_j\}, \{s_{jk}\})$
are equivalent and we may drop the upper index $(\phi, s)$ from notation.
The supremum norm $\|\w\|_\infty$ is defined similarly. \medskip

We now define the weighted and higher order H\"older spaces.

\begin{defn}\label{funny-spaces}
\begin{enumerate}
\item The weighted H\"older space for $\gamma \in \R$ is
\begin{align*}
&x^\gamma \ho(M\times [0,T], S) := \{ \, x^\gamma \w \mid \w \in \ho(M\times [0,T], S) \, \}, 
\\ &\textup{with H\"older norm} \ \| x^\gamma \w \|_{\A, \gamma} := \|\w\|_{\A}.
\end{align*}
\item The hybrid weighted H\"older space for $\gamma \in \R$ is
\begin{align*}
&\ho_{, \gamma} (M\times [0,T], S) := x^\gamma \ho(M\times [0,T], S)  \, \cap \, 
x^{\gamma + \A} \mathcal{C}^0_{\textup{ie}}(M\times [0,T], S) \\
&\textup{with H\"older norm} \  \| \w \|'_{\A, \gamma} := \|x^{-\gamma} 
\w\|_{\A} + \|x^{-\gamma-\A} \w\|_\infty.
\end{align*}
\item Let $C^k(M\times [0,T],S)$ denote the space of 
$S$-sections that are $k$-times continuously differentiable 
in the open interior $M$ of $\overline{M}$. We identify the local expressions 
$\{x\partial_x, x \partial_y, \partial_z\}$ over $\U$ 
with their smooth extensions to vector fields over $M$.
Then the weighted H\"older spaces of order $k\in \N$ are defined for any 
weight $\gamma \in \R$ as subspaces of 
\begin{equation*}
\begin{split}
&\hok (M\times [0,T], S)_\gamma = \{\w\in \ho_{,\gamma}\cap C^k \mid  \{\V^j \circ \, (x^2 \partial_t)^\ell\} 
\, \w \in \ho_{,\gamma} \ \textup{for any} \ j+2\ell \leq k \}, \\
&\hok (M\times [0,T], S)^b_\gamma = 
\{u\in \ho \cap C^k \mid  \{\V^j \circ \, (x^2 \partial_t)^\ell\} \, u \in 
x^\gamma\ho \ \textup{for any} \ j+2\ell \leq k, \\
&\textup{if there is at least one $(x\partial_x)$ or $\partial_z$ derivative
included in $\V^j \circ \, (x^2 \partial_t)^\ell$}, 
\\ &\textup{otherwise $\V^j \circ \, (x^2 \partial_t)^\ell u = (x\partial_y)^j \circ (x^2 \partial_t)^\ell 
u \in x^{\min\{\gamma, j+2\ell\}} \ho$} \, \}, 
\end{split}
\end{equation*}

\item In case of $\gamma=0$ we just write $\hok = \hok(M\times [0,T], S)^b_0$.

\item The weighted H\"older spaces of time-independent functions
are given by\footnote{Regularity under differentiation by $\partial_t$ becomes irrelevant in this case.}
\begin{align*}
\hok (M, S)_\gamma:= \{u \in \hok (M\times [0,T], S)_\gamma \mid u(\cdot, t) \ \textup{is independent of} \ t\in [0,T]\}, \\
\hok (M, S)^b_\gamma:= \{u \in \hok (M\times [0,T], S)^b_\gamma \mid u(\cdot, t) \ \textup{is independent of} \ t\in [0,T]\}.
\end{align*}

\end{enumerate}
\end{defn} 

In order to define the H\"older norms for $\hok (M\times [0,T], S)_\gamma$
and $\hok (M\times [0,T], S)^b_\gamma$, we consider as before any finite 
cover $\{U_i\}_{i\in I}$ of $\overline{M}$ by open coordinate charts, which we may assume to trivialize $S$
by appropriate refinement, and a partition of unity $\{\phi_j\}_{j\in J}$ subordinate to that cover.
By a small abuse of notation we now identify $\mathcal{V}_e$ with 
a finite set of generating edge vector fields, when applied to sections with 
compact support in $\U$; and write $\mathcal{V}_e$ for any local 
orthonormal frame of vector fields, when applied to sections with compact
support in a coordinate chart with distance bounded from below away from the edge
singularity. We may now introduce $\mathscr{D}:=\{\V^j \circ \, (x^2 \partial_t)^\ell \mid j+2\ell \leq k \}$ and  
can now write the H\"older norms on the higher order H\"older spaces as follows
\begin{equation}\label{norms}
\begin{split}
&\|\w\|_{k+\A, \gamma} = \sum_{j\in J} \sum_{X\in \mathscr{D}} \| X (\phi_j \w) \|'_{\A, \gamma}
+ \|\w\|'_{\A, \gamma}, \quad \textup{on} \ \hok (M\times [0,T], S)_\gamma, \\
&\|\w\|_{k+\A, \gamma} = \sum_{j\in J} \sum_{X\in \mathscr{D}} \| X (\phi_j \w) \|_{\A, \gamma}
+ \|\w\|_{\A}, \quad \textup{on} \ \hok (M\times [0,T], S)^b_\gamma,
\end{split}
\end{equation}
where in the second definition we replace $\| X (\phi_j \w) \|'_{\A, \gamma}$ by 
$\| X (\phi_j \w) \|'_{\A, \min \{\gamma, j+2\ell\}}$ if $X=(x\partial_y)^j \circ (x^2\partial_t)^\ell$.
Any different choice of coordinate charts and the subordinate partition of unity, as well as
different choices of generating vector fields $\mathcal{V}_e$ define equivalent H\"older norms.
\medskip

For sections $\w$ compactly supported away from $\partial \overline{M}$, the 
H\"older norms above are equivalent to the classical parabolic H\"older norms introduced by Ladyzhenskaya, 
Solonnikov and Ural'tseva \cite{LSU}. \medskip

The vector bundle $S$ decomposes into a direct sum of sub-bundles
\begin{align}
S= S_0 \oplus S_1, 
\end{align}
where the sub-bundle $S_0=\textup{Sym}_0^2({}^{ie}T^*M)$
is the space of trace-free (with respect to the fixed metric $g$) symmetric $2$-tensors,
and $S_1$ is the space of pure trace (with respect to the fixed metric $g$) symmetric 
$2$-tensors. The sub bundle $S_1$ is trivial real vector bundle over $M$ of rank 1.
Definition \ref{funny-spaces} extends verbatim to sections of $S_0$ and $S_1$.
Since the sub-bundle $S_1$ is a trivial rank one real vector bundle, its sections
correspond to scalar functions. Hence, we may omit $S_1$ from the notation and
simply write e.g. 
\begin{equation}\begin{split}
&\hok (M\times [0,T])_\gamma := \hok (M\times [0,T], S_1)^b_\gamma, \\
&\hok (M)_\gamma := \hok (M, S_1)^b_\gamma.
\end{split}\end{equation}

The H\"older spaces $\hok (M\times [0,T])^b_\gamma$ and 
$\hok (M\times [0,T], S)_\gamma$ are similar but not the same. They are adapted to the mapping properties
of the heat operators for the Laplace Beltrami operator $\Delta$ and the Lichnerowicz Laplacian $\Delta_L$ 
with the former satisfying stochastic completeness. We will address the analytic reason for using such spaces in Remark \ref{why-such-spaces}. \medskip

Moreover, we refer the reader to the Appendix \ref{spaces-comparison} for a detailed comparison of the various 
H\"older spaces on singular incomplete edge manifolds that appear in the literature, foremost the spaces in \cite{JMR, BV}.
\medskip

We conclude the subsection with a definition of a H\"older regular geometry.

\begin{defn}\label{regular-geometry}
Let $\A\in (0,1), k \in \N_0$ and $ \gamma > 0$. 
An admissible edge space $(M,g)$ is $(\A, \gamma, k)$-H\"older regular if the following two conditions are satisfied
\begin{enumerate}
\item For the curvature $(0,4)$ tensor $R(g)$ acting on any sections
$X_1, X_2, X_3, X_4 \in C^\infty(\overline{M}, {}^{ie}TM)$
$$R(g) [X_1, X_2, X_3, X_4] \in x^{-2} \mathcal{C}^{k,\A}_{\textup{ie}}(M).$$ 
\item $\scal(g) \in x^{-2+\gamma} \mathcal{C}^{k,\A}_{\textup{ie}}(M)$ 
 and the trace-free part of $\textup{Ric}(g)$ is $\mathcal{C}^{k,\A}_{\textup{ie}}(M, S_0)_{-2+\gamma}$.
\end{enumerate}
\end{defn}

\subsection{Existence of the singular Ricci flow} \medskip

Given a compact smooth Riemannian manifold $(M,g_0)$,
the Ricci flow of $g_0$ is by definition a family $g(t), t\in [0,T]$ of Riemannian metrics on $M$, 
satisfying the Ricci flow equation
\begin{equation}\label{RDF-1}
\partial_t g(t) = -2 \, \textup{Ric} (g(t)), \quad g(0) = g_0.
\end{equation}
Ricci flow is not a parabolic equation due to its diffeomorphism invariance. Therefore existence of solutions 
does not follow directly from the classical parabolic theory. This problem is resolved using the so-called 
de Turck trick \cite{Turck}. The de Turck trick leads to an equivalent Ricci de Turck flow $g(t)$, which is given by the following equation.
\begin{equation}\label{RDF-2}
\partial_t g(t) = -2 \, \textup{Ric} (g(t)) + \mathcal{L}_{W(t)} g(t), \quad g(0) = g_0,
\end{equation}
where $W(t)$ is the de Turck vector field defined in terms of the Christoffel symbols for 
the metrics $g(t)$ and a reference metric $\widetilde{g}$\footnote{The reference metric 
$\widetilde{g}$ is often taken as the initial metric $\widetilde{g} = g_0$.} 
\begin{equation}
W(t)^k = g(t)^{ij} \left(\Gamma^k_{ij}(g(t)) - \Gamma^k_{ij}(\widetilde{g})\right). 
\end{equation}
The de Turck vector field $W(t)$ yields a one parameter family of diffeomorphisms $\phi(t)$
and the pullback $\phi(t)^*g(t)$ solves the Ricci flow \eqref{RDF-1}. Ricci de Turck flow is a parabolic equation 
and existence of its solution can be easily obtained by the following argument. The equation \eqref{RDF-2} is linearized by 
writing $g(t) = v(t) + g_0$, which leads to a non-linear parabolic equation for $v(t)$
\begin{equation}\label{RDF-3}
(\partial_t + \Delta_L) v(t) = -2 \, \textup{Ric} (g_0) + O_2(v(t)), \quad v(0) = 0,
\end{equation}
where $\Delta_L$ is the Lichnerowicz Laplacian on symmetric two-tensors and 
$O_2(v(t))$ is a sum of terms which are at least quadratic in $v(t)$ and its first 
and second order derivatives. Clearly, the solution $v(t)$ is a fixed point of the following map 
\begin{equation}
\begin{split}
\Phi: & \ C^{2,\alpha}(M\times [0,T]) \longrightarrow C^{2,\alpha}(M\times [0,T]), \\
&u \mapsto \int_0^t e^{-(t-\wt)\, \Delta_L} \left(
-2 \, \textup{Ric} (g_0) + O_2(u(\wt)) \right) d\wt, 
\end{split}
\end{equation}
where $e^{-t\, \Delta_L}$ is the heat operator of the Lichnerowicz Laplacian $\Delta_L$ and $C^{k,\alpha}$ are the usual 
parabolic H\"older spaces with $\alpha \in (0,1)$ and $k\in \N_0$. Classical Schauder estimates of Ladyzhenskaya, Solonnikov and
Ural'tseva \cite{LSU} essentially prove the mapping property of the heat operator (acting with a convolution in time)
\begin{equation}
\begin{split}
e^{-t\, \Delta_L}: & \ C^{0,\alpha}(M\times [0,T]) \longrightarrow C^{2,\alpha}(M\times [0,T]), \\
&u \mapsto \int_0^t e^{-(t-\wt)\, \Delta_L} u(\wt) d\wt, 
\end{split}
\end{equation}
and imply that $\Phi$ is bounded, since $O_2(u(\wt)) \in C^{0,\alpha}$ for $u \in C^{2,\alpha}$. 
A simple argument shows that for $\mu> 0 $ sufficiently small, $\Phi$ is a contraction on 
\begin{align}
Z_{\mu,T} := \{u \in C^{2,\alpha}(M\times [0,T]) \mid \|u\|_{2,\A} \leq \mu\},
\end{align}
mapping $Z_{\mu,T}$ to itself for $\mu, T > 0$ sufficiently small. Consequently, by the Banach fixed point theorem 
there exists a fixed point $v(t)$ of $\Phi$, which is a solution to the Ricci de Turck flow by construction. \medskip

While on smooth compact manifolds, Ricci flow continues to be a focal point of intensive research, on singular
spaces even existence of Ricci flow is an open problem. If the manifold $(M.g)$ is singular, the 
argument outlined above may break down. The major difficulty hereby is whether
some analogue of parabolic Schauder estimates as derived in \cite{LSU} can be established in the singular setting.
The purpose of the present work is to study the Ricci de Turck flow on a singular edge manifolds.
We derive parabolic Schauder estimates in this setting and prove short time existence of Ricci de Turck flow. 
\medskip

Our first main result establishes short time existence
of Ricci de Turck flow starting at an admissible incomplete edge metric of H\"older regular geometry 
and flowing through the space of singular metrics, which preserves the admissible edge 
structure and H\"older regular geometry. The result holds under an additional assumption of tangential
stability, which is a spectral condition imposed upon the Lichnerowicz Laplace operator 
introduced below in Definition \ref{assump-spectrum} and discussed in detail in Theorem \ref{stability-thm}.
Let us point out that \cite[Theorem 1.4]{KrVe} provides
an extensive list of explicit examples, where tangential stability is satisfied.
 
\begin{thm}\label{main1}
Consider an incomplete edge manifold $(M,g)$ with an admissible edge metric and 
H\"older-regular geometry, satisfying the assumption of tangential stability. Then for short time $g$ may be evolved under 
the Ricci de Turck flow into a family of Riemannian metrics $g(t)$ within the space of admissible edge metrics of 
H\"older regular geometry for some finite time $T>0$. 
\end{thm}

We will also address the relation between the Ricci de Turck and the Ricci flow,
which is intricate in terms of regularity. \medskip

Our second main result concerns Ricci flow starting at metrics that are in a certain sense 
higher order small perturbations of flat incomplete edge metrics. In that case we actually obtain
long time existence. 

\begin{thm}\label{main2}
Consider an incomplete edge manifold $(M,h)$ of H\"older regular geometry 
with an admissible flat edge metric, satisfying the
assumption of tangential stability. If $g_0$ is a higher order sufficiently small perturbation of $h$, then a 
Ricci de Turck flow $g(t)$ of admissible incomplete edge metrics of H\"older regular geometry, 
starting at $g_0$, exists for all time and stays in a small $\varepsilon$-neighborhood of $h$, uniformly in $t\geq 0$.
\end{thm}

In a joint paper with Kr\"oncke \cite{KrVe} we discuss stability
of the Ricci de Turck flow for small perturbations of Ricci flat (not necessarily flat) singular metrics, 
assuming certain integrability conditions outside of the scope of the present paper.\medskip

In fact, Ricci flow through singular metrics has been studied by various authors in dimension two, e.g.
by Mazzeo, Rubinstein and Sesum in \cite{MRS}, our work jointly with Bahuaud in \cite{BV}. 
Somewhat different from the approach taken here, is the work by Giesen and Topping on instantaneously
complete Ricci flow in \cite{Topping} and \cite{Topping2}. Another alternative approach has been 
taken by Miles Simon in \cite{MS}, where Ricci flow smoothens out any Lipschitz singularity instantly.\medskip

The idea of the proof for both Theorem \ref{main1} and Theorem \ref{main2}
is, exactly as in the compact smooth case, to linearize the Ricci de Turck flow 
and apply Banach fixed point theorem in appropriate H\"older spaces. This requires 
mapping properties of the heat operator for the Lichnerowicz Laplacian. The bulk of the
paper is therefore devoted to deriving these mapping properties in the singular setting. 
\medskip

This paper is organized as follows. We begin with the analysis of the Lichnerowicz Laplace 
operator in \S \ref{Lichnerowicz-section} and construct a solution to its heat equation as a polyhomogeneous
conormal distribution on a blown up heat space. In \S \ref{mapping-section} we establish various 
mapping properties of  the heat operator for the Lichnerowicz and the Laplace Beltrami operators. 
We employ these mapping properties to establish existence of a solution to the Ricci de Turck 
flow in \S \ref{short-section} and show in \S \ref{edge-flow} that this flow is indeed a flow of admissible incomplete edge metrics.
Then \S \ref{Ricci-flow} explains how to pass from the Ricci de Turck solution to the corresponding 
solution of the Ricci flow, along with a change in regularity. In \S \ref{Ricci-bounded-section} we 
discuss H\"older regularity of the Ricci de Turck flow for positive times,
an aspect which will be crucial in subsequent maximum principle arguments. We conclude this 
paper with a long time existence result in \S \ref{small} for Ricci flow of metrics that are sufficiently small perturbations
of flat edge metrics. \medskip

\emph{Acknowledgements:} The author thanks Burkhard Wilking, Christoph B\"ohm, Rafe Mazzeo and Eric Bahuaud for 
important discussions about aspects of Ricci flow and encouragement. He thanks Klaus Kr\"oncke for helpful discussions concerning
computations in his paper on Einstein warped products. He is grateful to the anonymous referee for careful reading of the
manuscript, important remarks and suggestions. The author also gratefully acknowledges support
of the Mathematical Institute at M\"unster University.

\section{Lichnerowicz Laplacian on $2$-tensors of an exact cone}\label{Lichnerowicz-section}

In this section we study the rough and the Lichnerowicz Laplace operators 
acting on symmetric $2$-tensors over an exact cone $\mathscr{C}(F):=(0,1) \times F$ 
with an exact conical metric $g=dx^2 \oplus x^2 g_F$. We provide explicit 
formulae and formulate assumptions that are necessary for the subsequent 
analytic arguments.\medskip

Consider for the moment any Riemannian manifold $(M,g)$ of dimension $m$. We will specify
$M$ to be an exact cone $\mathscr{C}(F)$ right after the general definition.
Let $L$ denote any vector bundle associated to $T^*M$, for instance the bundle 
$S$ of symmetric trace-free $2$-tensors $\textup{Sym}^2_0(T^*M)$. Let $\nabla$ denote the induced 
Levi-Civita connection acting on smooth compactly supported sections as 
\begin{align*}
&\nabla: C^\infty_0(M, L) \to C^\infty_0(M,L\otimes T^*M), \\
&\nabla^2: C^\infty_0(M, L) \to C^\infty_0(M,L\otimes T^*M\otimes T^*M).
\end{align*}
The rough Laplacian $\Delta$, acting on smooth compactly supported sections of $L$, is then 
defined as follows. Consider the pointwise inner product on fibres of $L$, induced by the Riemannian 
metric $g$ on $M$. Let $\{e_i\}_{i = 1}^m$ denote a local orthonormal frame of $TM$, where 
$m$ is the dimension of $M$. The rough Laplacian is given by
\begin{align}
\Delta := - \sum_{i = 1}^m \nabla^2_{e_i e_i} \equiv - \textup{tr}(\nabla^2).
\end{align}
The Lichnerowicz Laplacian on symmetric covariant $2$-tensors is defined in terms of the 
rough Laplacian $\Delta$ and additional curvature terms by
\begin{align}
\Delta_L:= \Delta + 2(\textup{Ric} - \textup{Riem}),
\end{align}
where for any symmetric covariant $2$-tensor $\w$ on any Riemannian manifold $(M,g)$, 
with the corresponding curvature tensor $\textup{Riem}(g)$ and the Ricci curvature tensor $\textup{Ric}(g)$, we have
\begin{equation}
\begin{split}
&(\textup{Ric} \, \w)_{ij} := \left(\textup{Ric}(g)_{ik} \, g^{k\ell} \w_{\ell j} + \textup{Ric}(g)_{jk} \, g^{k\ell} \w_{\ell i}\right)/2, \\
&(\textup{Riem} \, \w)_{ij} := \textup{Riem}(g)_{ikj\ell} \, g^{ks} g^{\ell t} \w_{st}.
\end{split}
\end{equation}
Let us now specify the action of $\Delta_L$ in case of an exact cone.
Let $M=\mathscr{C}(F):=(0,1) \times F$ be an exact cone with an exact conical metric 
$g=dx^2 \oplus x^2 g_F$. Let $L=\textup{Sym}^2({}^{ie}T^*\mathscr{C}(F))$ 
be the bundle of symmetric covariant $2$-tensors on the cone. We may decompose 
$$
C^\infty_0(\mathscr{C}(F), L)  \equiv 
C^\infty_0(\mathscr{C}(F), \textup{Sym}^2({}^{ie}T^*\mathscr{C}(F))) 
= C^\infty_0(\mathscr{C}(F)) g \oplus C^\infty_0(\mathscr{C}(F),S_0)
$$ 
into the pure trace and the trace-free parts with respect to the 
Riemannian edge metric $g$. This decomposition is preserved under the Lichnerowicz Laplacian. By a minor abuse of
notation, $\Delta_L$ shall refer to the Lichnerowicz Laplacian on the trace-free part, while $\Delta'_L$ denotes its action on 
the pure trace component. $\Delta'_L$ is given by the action of the Laplace Beltrami operator with 
($f=\dim F$)
\begin{align} 
\Delta'_L ( u\cdot g) = \left(- \partial_x^2 u - \frac{f}{x} \partial_x u  + \frac{1}{x^2} \, \square'_L u \right) \cdot g,
\end{align}
where $\square'_L$ is the Laplace Beltrami operator of $(F,g_F)$. The action of $\Delta_L$ on the trace-free
component has been computed by Delay \cite[Lemma 4.2]{Delay} and Guillarmou, Moroianu and Schlenker  \cite[7.4]{Gui}.
Let $\{z_\alpha\}$ denote local coordinates on $F$. We write for any symmetric trace-free $2$-tensor $\w \in C^\infty_0(\mathscr{C}(F),S_0)$
\begin{align*}
\w_{xx}:= \w(\partial_x, \partial_x), \quad &\w_{x\A}:= \w(\partial_x, x^{-1}\partial_{z_\A}), 
\qquad \w_{\A \beta}:= \w(x^{-1}\partial_{z_\A}, x^{-1}\partial_{z_\beta}); \\
&\w'_{x\A}:= \w(\partial_x, \partial_{z_\A}) = x \, \w_{x\A}, 
\quad  \w'_{\A \beta}:= \w(\partial_{z_\A}, \partial_{z_\beta})= x^2 \w_{\A \beta}.
\end{align*}
For simplicity of notation we denote the scalar function $\w_{xx}$
by $\eta$, the $(1,0)$-tensor $(\w'_{x\A})_\A$ by $\xi'$, and the symmetric $2$-tensor $(\w'_{\A \beta})_{\A\beta}$ 
on the cross section of the cone by $\kappa'$. Clearly, our convention is to use 
greek letters for components corresponding to the cross section $F$. Operators and quantities referring
to the cross section $(F,g_F)$ of the cone $\mathscr{C}(F)$ are denoted with an 
additional index $F$. Then we obtain as in \cite[Lemma 4.2]{Delay} and \cite[7.4]{Gui}
\begin{align*}
&(\Delta_L \w)_{xx} = \left( -\partial_x^2 - \frac{f}{x} \partial_x + \frac{1}{x^2} \left(\Delta^F_L + 2f+2\right)\right) \eta
- \frac{4}{x^3} \delta_F \xi', \\
&(\Delta_L \w)'_{x\A} = \left( -\partial_x^2 - \frac{f-2}{x} \partial_x + \frac{1}{x^2} \left(\Delta^F_L + f+2\right)\right) \xi'_\A
- \frac{2}{x} \partial_{z_\A}\eta + \frac{2}{x^3}\delta_F \kappa'_\A, \\
&(\Delta_L \w)'_{\A\beta} = \left( -\partial_x^2 - \frac{f-4}{x} \partial_x + \frac{1}{x^2} \left(\Delta^F_L - 4 \right)\right) \kappa'_{\A\beta}
- 2 \, \eta \, g^F_{\A\beta} \\ & \qquad \qquad \qquad \qquad \qquad \qquad \qquad \qquad 
+ \frac{2}{x^2} (\textup{tr}_{g_F} \kappa') g^F_{\A\beta} - \frac{2}{x} \delta^*_F \xi'_{\A\beta},
\end{align*}
where $\Delta^F_L$ acting on $h$ refers to the Lichnerowicz Laplacian acting on symmetric $2$-forms
over $F$ and moreover, we have introduced the following notation
\begin{align*}
&\Delta^F_L \eta := \Delta_F \eta \equiv \square'_L \eta, \quad \delta_F \xi' := g_F^{\A\beta} (\nabla^F_\beta \xi')_\A, \\
&\Delta^F_L \xi'_\A := (\nabla_F^*\nabla_F \xi')_\A + g_F^{\gamma\beta} \textup{Ric}^F_{\beta\A} \xi'_\gamma, \quad
\delta_F \kappa'_\A := g_F^{\gamma\beta} (\nabla^F_\beta \kappa')_{\A\gamma}, \\
&(\delta^*_F \xi')_{\A\beta}:= (\nabla^F_\A \xi')_\beta + (\nabla^F_\beta \xi')_\A.
\end{align*}
The formulae become more transparent if we switch to the action on $(\w_{xx}, \w_{x\A}, \w_{\A \beta})$.
We denote the $(1,0)$-tensor $(\w_{x\A})_\A$ by $\xi$, and the symmetric $2$-tensor $(\w_{\A \beta})_{\A\beta}$ 
on the cross section of the cone by $\kappa$. Clearly, $\xi' = x \xi$ and $\kappa'=x^2 \kappa$. 
The action of the Lichnerowicz Laplacian with respect to that rescaling is now given by
\begin{align*}
&(\Delta_L \w)_{xx} = \left( -\partial_x^2 - \frac{f}{x} \partial_x + \frac{1}{x^2} \left(\Delta^F_L + 2f + 2 \right)\right) \eta
- \frac{4}{x^2} \delta_N \xi, \\
&(\Delta_L \w)_{x\A} = \left( -\partial_x^2 - \frac{f}{x} \partial_x + \frac{1}{x^2} \left(\Delta^F_L + 4 \right)\right) \xi_\A
- \frac{2}{x^2} \partial_{z_\A} \eta + \frac{2}{x^2}\delta_F \kappa_\A, \\
&(\Delta_L \w)_{\A\beta} = \left( -\partial_x^2 - \frac{f}{x} \partial_x + \frac{1}{x^2} \left(\Delta^F_L + 2 - 2f \right)\right) \kappa_{\A\beta}
- \frac{2}{x^2} \, \eta \, g^F_{\A\beta} \\ & \qquad \qquad \qquad \qquad \qquad \qquad \qquad \qquad 
+ \frac{2}{x^2} (\textup{tr}_{g_F} \kappa) g^F_{\A\beta} - \frac{2}{x^2} \delta^*_F \xi_{\A\beta},
\end{align*}
Identifying any trace-free $\w$ with the vector of its 
$(\w_{xx}, \w_{x\A}, \w_{\A\beta})$ components,  
\begin{equation}
\begin{split}
C^\infty_0(\mathscr{C}(F), S_0) &\to C^\infty_0((0,1), C^\infty(F) 
\times \Omega^1(F) \times \textup{Sym}^2(T^*F)),\\
\w &\mapsto (\w_{xx}, \w_{x\A}, \w_{\A\beta}),
\end{split}
\end{equation}
where $\Omega^1(F)$ denotes differential $1$-forms on $F$,
we arrive at the following expression for the action of the Lichnerowicz Laplacian on $\w \equiv (\w_{xx}, \w_{x\A}, \w_{\A\beta})$
\begin{align}\label{reg-sing}
\Delta_L \w =  \left(- \partial_x^2 - \frac{f}{x} \partial_x  + \frac{1}{x^2} \, \square_L \right)  \w,
\end{align}
where the action of $\square_L$ is given by
\begin{align}\label{tangential-matrix}
\square_L  = \left( \begin{array}{clc} \Delta^F_L +2f+2 & -4 \delta_F & 0 \\ -2 \partial_z & \Delta^F_L+4 & 2 \delta_F \\ 
-2g_F & -2 \delta_F^* & \Delta^F_L+2-2f+ 2g_F \textup{tr}_{g_F} \end{array}\right).
\end{align}
We may now introduce the assumption of tangential stability.

\begin{defn}\label{assump-spectrum}
We call an admissible edge manifold $(M,g)$ tangentially stable with lower bounds
$u_0, u_1>0$, if $\min (\textup{Spec} \, 
\square_L) = u_0$
and $\min (\textup{Spec} \, \square'_L \backslash \{0\}) = u_1$.
\end{defn}

In the follow-up joint work with Kr\"oncke \cite[Theorem 1.3]{KrVe} we characterize
tangential stability explicitly in terms of the spectrum of the Einstein and the Hodge 
Laplace operators on the cross section $(F,g_F)$. We state the result here for
completeness. 

\begin{thm}\label{stability-thm}
	Let $(F,g_F)$ be a compact Einstein manifold of dimension $f\geq 3$ 
	with the Einstein constant $(f-1)$. We write $\Delta_E$ for its Einstein operator, and denote the Laplace Beltrami 
	operator by $\Delta$. Then tangential stability holds if and only if $\mathrm{Spec}(\Delta_E|_{TT})>0$ and $\mathrm{Spec}(\Delta)\setminus \left\{0\right\}\cap (f, 2(f+1)]=\varnothing$. 
\end{thm}

We also identify in \cite[Theorem 1.4]{KrVe}
an extensive list of explicit examples, where tangential stability is satisfied.
This includes e.g. certain simple Lie groups and rank-1 symmetric spaces of compact type. 
The actual statement in \cite{KrVe} also identifies the cases where tangential
stability fails. Moreover \cite{KrVe} shows that the only example where $(F,g_F)$
is weakly tangentially stable in the sense of Definition \ref{weak-stability-definition},
but not tangentially stable is the case of a sphere.

\section{Heat operator of the Lichnerowicz Laplacian} 
We consider the Lichnerowicz Laplacian $\Delta_L$, acting on trace-free 
symmetric two-tensors on an admissible 
incomplete edge manifold $(M,g)$. In this subsection we
consider the homogeneous or the inhomogeneous heat equations 
\begin{equation}\label{heat-hom-inhom}
\begin{split}
&(\partial_t + \Delta_L) \, \w_{\textup{hom}}(t,p)  = 0, \ \w_{\textup{hom}}(0,p)= \w_0(p), \\
&(\partial_t + \Delta_L) \, \w_{\textup{inhom}}(t,p)  = v(t,p), \ \w_{\textup{inhom}}(0,p)= 0, \\
\end{split}
\end{equation}
and obtain their fundamental solutions, following the heat kernel construction of \cite{MazVer}. 
The solutions are given in terms of an integral convolution operator acting on compactly supported sections such that
\begin{equation} \label{eqn:hk-on-functions}
\begin{split}
&\w_{\textup{hom}}(t,p) = \left(e^{-t\Delta_L} \w_0\right) (t,p) := \int_M \left(e^{-t\, \Delta_L} \left(p,\widetilde{p} \right),  
\w_0(\widetilde{p})\right)_g \dv (\widetilde{p}), \\
&\w_{\textup{inhom}}(t,p) = \left(e^{-t\Delta_L} v\right) (t,p) := \int_0^t \int_M \left(e^{-(t-\wt)\, \Delta_L} \left( p,\widetilde{p} \right),  
v(\wt, \widetilde{p})\right)_g \dv (\widetilde{p}) d\wt.
\end{split}
\end{equation}
In both cases we denote the fundamental solution by $e^{-t\Delta_L}$, which acts by time convolution on 
time-dependent sections. \medskip

Under the additional assumption $\Delta_L\geq 0$ on smooth compactly supported 
sections of $S_0$, the fundamental solution $e^{-t\Delta_L}$ can be identified
with the heat operator of the Friedrichs self-adjoint extension of the Lichnerowicz Laplacian.
This will be explained below in Theorem \ref{essential} and is crucial 
later on for the argument on the long time existence of the Ricci flow 
starting at small perturbations of flat metrics. 

\subsection{Heat kernel of a model operator} 

Before we proceed with an asymptotic analysis of the fundamental solution for the 
Lichnerowicz Laplacian $\Delta_L$ on an admissible edge manifold $(M,g)$, we consider 
a model operator which already comprises all the central properties of $\Delta_L$. 
Let us write $\R_+:= (0,\infty)$. Consider for any $\mu\geq 0$ the model operator 
\begin{align}\label{model-l}
\ell_\mu := - \frac{d^2}{ds^2} - \frac{f}{s} \frac{d}{ds} + \frac{1}{s^2}
\left(\mu^2 - \left(\frac{f-1}{2}\right)^2\right): C^\infty_0(\R_+) \to C^\infty_0(\R_+),
\end{align}
acting on compactly supported smooth test functions $C^\infty_0(\R_+)$.
This operator is symmetric with respect to the inner product of $L^2(\R_+, s^f ds)$.
It can be conveniently studied under the unitary rescaling transformation 
$\Phi: L^2(\R_+, s^f ds) \to L^2(\R_+, ds)$ with $\Phi (u) = s^{f/2} u$. 
Then
\begin{align}
\Phi \circ \ell_\mu \circ \Phi^{-1} = - \frac{d^2}{ds^2} + \frac{1}{s^2}
\left(\mu^2 - \frac{1}{4}\right) =: L_\mu,
\end{align}
is a symmetric operator in $L^2(\R_+,ds)$.
Consider the maximal and minimal domains for $L_\mu$ acting on $C^\infty_0(\R_+)$
\begin{equation}
\begin{split}
\dom_{\max}(L_\mu) := \{ &\w \in L^2(\R_+) \mid L_\mu \w \in L^2(\R_+)\}, \\
\dom_{\min}(L_\mu) := \{ &\w \in \dom_{\max}(L_\mu) \mid \exists (\w_n) \subset C^\infty_0(\R_+):
\\ &\w_n \xrightarrow{L^2} \w, \quad L_\mu \w_n \xrightarrow{L^2} L_\mu \w\},
\end{split}
\end{equation}
where $L_\mu \w \in L^2(\R_+)$ on $\w \in L^2(\R_+)$ is understood in the distributional sense. 
The maximal (minimal) domain of $\ell_\mu$ is defined similarly and the domains are related by the unitary transformation $\Phi$
$$
\dom_{\max}(\ell_\mu) = \Phi^{-1} \dom_{\max}(L_\mu), \quad \dom_{\min}(\ell_\mu) = \Phi^{-1} \dom_{\min}(L_\mu).
$$
By explicit computations, see for example \cite[Proposition 2.10]{Ver}, any $\w \in \dom_{\max}(L_\mu)$ admits an asymptotic expansion 
\begin{equation}
\begin{split}
&\w = c^+_\mu (\w) \, x^{\frac{1}{2}} + c^-_\mu (\w) \, x^{\frac{1}{2}} \log (x) + \widetilde{\w},
\quad \textup{if} \ \mu = 0, \\ 
&\w = c^+_\mu (\w) \, x^{\mu + \frac{1}{2}} + c^-_\mu (\w) \, x^{-\mu + \frac{1}{2}} + \widetilde{\w},
\quad \textup{if} \ \mu \in (0,1), \\ 
&\w =  \widetilde{\w},
\quad \textup{if} \ \mu \geq 1, \\ 
\end{split}
\end{equation}
with coefficients $c^\pm_\mu (\w) \in \C$ and $\widetilde{\w} \in \dom_{\min}(L_\mu)$. 
Hence any $\w \in \dom_{\max}(\ell_\mu) = \Phi^{-1} \dom_{\max}(L_\mu) $ admits an asymptotic expansion
\begin{equation}\label{max-expansion-model}
\begin{split}
&\w = c^+_\mu (\w) \, x^{ - \frac{(f-1)}{2}} + c^-_\mu (\w) \, x^{ - \frac{(f-1)}{2}} \log(x) + \widetilde{\w},
\quad \textup{if} \ \mu = 0, \\ 
&\w = c^+_\mu (\w) \, x^{\mu - \frac{(f-1)}{2}} + c^-_\mu (\w) \, x^{-\mu - \frac{(f-1)}{2}} + \widetilde{\w},
\quad \textup{if} \ \mu \in (0,1), \\ 
&\w =  \widetilde{\w},
\quad \textup{if} \ \mu \geq 1, \\ 
\end{split}
\end{equation}
with coefficients $c^\pm_\mu (\w) \in \C$ and $\widetilde{\w} \in \dom_{\min}(\ell_\mu)$. We set $c^\pm _\mu(\w) = 0$ for $\mu \geq 1$.
For any $\w, v \in  \dom_{\max}(\ell_\mu)$ we compute using integration by parts
\begin{equation}\label{int-by-parts}
\begin{split}
&\langle L_\mu \Phi \w, \Phi v \rangle_{L^2(\R_+, ds)} - \langle \Phi  \w, L_\mu \Phi v \rangle_{L^2(\R_+, ds)} =  \\
& \langle \ell_\mu  \w,   v \rangle_{L^2(\R_+, s^f ds)} - \langle   \w, \ell_\mu   v \rangle_{L^2(\R_+, s^f ds)} 
= c_\mu \left( c^+_\mu(\w) \overline{c^-_\mu(v)} - c^-_\mu(\w) \overline{c^+_\mu(v)} \right).
\end{split}
\end{equation}
where $c_\mu = -1$ for $\mu = 0$ and $c_\mu = 2\mu$ otherwise. 
From this formula it becomes clear that boundary conditions need to be imposed 
on the coefficients $c^\pm_\mu(\w)$ in order to obtain a self-adjoint 
extension of $\ell_\mu$ and $L_\mu$ In case $\mu \geq 1$, $\ell_\mu$ and $L_\mu$ are essentially self-adjoint, since no 
boundary terms appear after integration by parts in \eqref{int-by-parts}.
\medskip

Existence of a self-adjoint extension of $L_\mu$ and $\ell_\mu$ with the same lower
bound as $L_\mu$ and $\ell_\mu$, respectively, both acting on $C^\infty_0(\R_+)$, is due to Friedrichs and 
Stone, see Riesz and Nagy \cite[Theorem on p. 330]{RN}, who introduced the so-called 
Friedrichs self-adjoint extension. Providing the functional analytic construction of the Friedrichs 
extension is out of scope of the present discussion. However the Friedrichs extension 
$L^{\mathscr{F}}_\mu$ of $L_\mu$, as well as the Friedrichs extension 
$\ell^{\mathscr{F}}_\mu$ of $\ell_\mu$ can be explicitly characterized as follows
\begin{equation}\label{model-cone-domain}
\begin{split}
&\dom (L^{\mathscr{F}}_\mu) = \{\w \in \dom_{\max}(L_\mu) \mid c^-_\mu(\w) = 0\}, \\
&\dom (\ell^{\mathscr{F}}_\mu) = \{\w \in \dom_{\max}(\ell_\mu) \mid c^-_\mu(\w) = 0\}.
\end{split}
\end{equation}

Both extensions are related by the unitary transformation 
$$
L^{\mathscr{F}}_\mu = \Phi \circ \ell^{\mathscr{F}}_\mu \circ \Phi^{-1}, \quad
\dom (\ell^{\mathscr{F}}_\mu) = \Phi^{-1} \dom (L^{\mathscr{F}}_\mu).
$$
The heat kernel $H_\mu$ of the Friedrichs extension $L^{\mathscr{F}}_\mu$ is well-known and 
 \cite[Proposition 2.3.9]{Les:OOF} provided its explicit expression in 
terms of the modified Bessel function $I_\mu$ of first kind
\begin{align}
H_\mu(t, s, \widetilde{s}) = \frac{1}{2t}(s\widetilde{s})^{1/2}I_{\mu}
\left(\frac{s\widetilde{s}}{2t}\right)e^{-\frac{s^2+\widetilde{s}^2}{4t}}. 
\end{align}
Hence, the heat kernel $e^{-t\ell_\mu}$ of $\ell^{\mathscr{F}}_\mu$ is given by 
$(s\widetilde{s})^{-f/2}H_\mu(t, s,\widetilde{s})$, so that we obtain
\begin{align}\label{model-heat-kernel}
e^{-t\ell_\mu} (s, \widetilde{s}) = \frac{1}{2t}(s\widetilde{s})^{(1-f)/2}I_{\mu}
\left(\frac{s\widetilde{s}}{2t}\right)e^{-\frac{s^2+\widetilde{s}^2}{4t}}. 
\end{align}

\subsection{Microlocal construction of a fundamental solution} 

The Lichnerowicz Laplacian writes in local coordinates $(x,y,z)$ in the 
singular neighborhood $\U$, which is locally a fibration of cones 
$\mathscr{C}(F)$ over $B$, as a sum of the Lichnerowicz Laplacian 
$\Delta^{\mathscr{C}}_L$ on the cone $\mathscr{C}(F)$
and the Lichnerowicz Laplacian in $y \in \R^b$, plus higher order terms.  \medskip

The fundamental solution $e^{-t\Delta_L}$ will be a distribution on $M^2_h=\R^+\times \overline{M}^2$, 
taking values in $S_0\boxtimes S_0$, which 
is a vector bundle over $\overline{M}^2$ with the fibre $S_{0,p}\times S_{o,q}$ for any $(p,q) \in \overline{M}^2$.
Consider the local coordinates near the corner in $M^2_h$ given by $(t, (x,y,z), (\widetilde{x}, \wy, \widetilde{z}))$, 
where $(x,y,z)$ and $(\widetilde{x}, \wy, \widetilde{z})$ are two copies of coordinates on $\overline{M}$ near the boundary. 
The kernel $e^{-t\Delta_L}(t, (x,y,z), (\wx,\wy,\wz))$ has non-uniform behaviour at the submanifolds
\begin{align*}
&A =\{ (t, (x,y,z), (\wx,\wy,\wz))\in M^2_h \mid t=0, \, x=\wx=0, \, y= \wy\}, \\
&D =\{ (t, p, \widetilde{p})\in M^2_h \mid t=0, \, p=\widetilde{p}\},
\end{align*}
which requires an appropriate blowup of the heat space $M^2_h$, 
such that the corresponding heat kernel lifts to a polyhomogeneous distribution 
in the sense of the following definition, which we cite from \cite{Mel:TAP} and \cite{MazVer}.

\begin{defn}\label{phg}
Let $\mathfrak{W}$ be a manifold with corners and $\{(H_i,\rho_i)\}_{i=1}^N$ an enumeration 
of its (embedded) boundaries with the corresponding defining functions. For any multi-index $J= (b_1,
\ldots, b_N)\in \C^N$ we write $\rho^J := \rho_1^{b_1} \ldots \rho_N^{b_N}$.  Denote by 
$\mathcal{V}_b(\mathfrak{W})$ the smooth vector fields on $\mathfrak{W}$ lying
tangent to all boundary faces. \medskip

\begin{enumerate}
\item A distribution $\w$ on $\mathfrak{W}$ is said to be conormal,
if $\w$ is a restriction of a distribution across the boundary faces of $\mathfrak{W}$, 
$\w\in \rho^J L^\infty(\mathfrak{W})$ for some $J\in \C^N$ and $V_1 \ldots V_\ell \w \in \rho^J L^\infty(\mathfrak{W})$
for all $V_j \in \mathcal{V}_b(\mathfrak{W})$ and for every $\ell \geq 0$.  \medskip

\item An index set $E_i = \{(\gamma,p)\} \subset {\mathbb C} \times {\mathbb N_0}$ 
satisfies the following hypotheses:

\begin{enumerate}
\item $\textup{Re}(\gamma)$ accumulates only at $+\infty$,
\item for each $\gamma$ there exists $P_{\gamma}\in \N_0$, such 
that $(\gamma,p)\in E_i$ for all $p \leq P_\gamma$,
\item if $(\gamma,p) \in E_i$, then $(\gamma+j,p') \in E_i$ for all $j \in {\mathbb N_0}$ and $0 \leq p' \leq p$. 
\end{enumerate} \.  \medskip

\item An index family $E = (E_1, \ldots, E_N)$ is an $N$-tuple of index sets.  \medskip

\item Finally, we define the notion of polyhomogeneous conormal distributions iteratively in the dimension of $\mathfrak{W}$.
We say that a conormal distribution $\w$ is polyhomogeneous on $\mathfrak{W}$ 
with index family $E$, we write $\w\in \mathscr{A}_{\textup{phg}}^E(\mathfrak{W})$, 
if $\w$ is conormal and if in addition, near each $H_i$, 
\[
\w \sim \sum_{(\gamma,p) \in E_i} a_{\gamma,p} \rho_i^{\gamma} (\log \rho_i)^p, \ 
\textup{as} \ \rho_i\to 0,
\]
with coefficients $a_{\gamma,p}$ conormal on $H_i$, polyhomogeneous with index $E_j$
at any intersection $H_i\cap H_j$ of hypersurfaces. In the first iteration step, where 
$\dim \mathfrak{W} = 1$ and each boundary hypersurface $H_i$ is given by a point,
the coefficients $a_{\gamma,p}$ are complex numbers.
\end{enumerate}
\end{defn}

Blowing up submanifolds $A$ and $D$ is a geometric procedure of introducing polar coordinates on $M^2_h$, 
around the submanifolds together with the minimal differential structure which turns polar coordinates into smooth
functions on the blowup. A detailed account on the blowup procedure is given e.g. in \cite{Mel:TAP} and \cite{Gr}. 
Here we only give a basic idea and refer the reader to these references for an explicit account. \medskip

First we blow up parabolically 
(i.e. we treat $\sqrt{t}$ as a smooth variable) the submanifold $A$.
This defines $[M^2_h, A]$ as the disjoint union of
$M^2_h\backslash A$ with the interior spherical normal bundle of $A$ in $M^2_h$,
equipped with the minimal differential structure 
such that smooth functions in the interior of $M^2_h$ and polar coordinates 
on $M^2_h$ around $A$ are smooth. The interior spherical normal bundle of $A$ defines a new boundary 
hypersurface $-$ the front face ff in addition to the previous boundary faces 
$\{x=0\}, \{\wx=0\}$ and $\{t=0\}$, which lift to rf (the right face), lf (the left face) and 
tf (the temporal face), respectively.  \medskip

The actual heat-space $\mathscr{M}^2_h$ is obtained by a second parabolic blowup of  
$[M^2_h, A]$ along the diagonal $D$, lifted to a submanifold of $[M^2_h, A]$. 
We proceed as before by cutting out the lift of $D$ and replacing it with its spherical 
normal bundle, which introduces a new boundary face $-$ the temporal diagonal td. 
The heat-space $\mathscr{M}^2_h$ comes with the blowdown map $\beta: \mathscr{M}^2_h \to \R^+ \times \overline{M}^2$,
which is a diffeomorphism from the interior of $\mathscr{M}^2_h$ onto $(0,\infty) \times M^2$.
The heat space $\mathscr{M}^2_h$ and the lift of a curve starting at the corner of $M^2_h$ is illustrated in Figure \ref{heat-incomplete}. 
The base point of the lifted curve at the front face indicates the angle under which the curve approaches the corner 
of $(0,\infty) \times M^2$ before the lift. \medskip

\begin{figure}[h]
\begin{center}
\begin{tikzpicture}[scale=1.2]

\draw[->] (0,-0.2) -- (0,2);
\draw[->] (0,-0.2) -- (-2,-1);
\draw[->] (0,-0.2) -- (2,-1);

\node at (-0.4,1.8) {$t$};
\node at (-2.05, -0.6) {$x$};
\node at (2.05, -0.6) {$\wx$};

\draw (5,0.7) -- (5,2);
\draw(4.3,-0.5) -- (3,-1);
\draw (5.7,-0.5) -- (7,-1);
\draw (5,0.7) .. controls (4.5,0.6) and (4.3,0) .. (4.3,-0.5);
\draw (5,0.7) .. controls (5.5,0.6) and (5.7,0) .. (5.7,-0.5);
\draw (4.3,-0.5) .. controls (4.5,-0.6) and (4.6,-0.7) .. (4.7,-0.7);
\draw (5.7,-0.5) .. controls (5.5,-0.6) and (5.4,-0.7) .. (5.3,-0.7);
\draw (4.7,-0.7) .. controls (4.7,-0.3) and (5.3,-0.3) .. (5.3,-0.7);
\draw (4.7,-1.4) .. controls (4.7,-1) and (5.3,-1) .. (5.3,-1.4);
\draw (5.3,-0.7) -- (5.3,-1.4);
\draw (4.7,-0.7) -- (4.7,-1.4);

\draw[thick, dotted] (0,-0.2) .. controls (0.5,1) and (2,1) .. (2,2);
\draw[thick, dotted] (5.3,0.2) .. controls (5.5,1) and (6.8,1) .. (7.3,2);

\node at (5, -1.7) {$\td$};
\node at (5,0.1) {$\ff$};
\node at (6,0.3) {$\rf$};
\node at (4,0.3) {$\lf$};
\node at (5.8, -1) {$\tf$};
\node at (4.2, -1) {$\tf$};
\node at (2.5, 1) {$\beta$};

\draw[<-] (2, 0.5) -- (3,0.5);

\end{tikzpicture}
\end{center}
\caption{The heat-space $\mathscr{M}^2_h$.}
\label{heat-incomplete}
\end{figure}

We now describe projective coordinates in a neighborhood of the front face ff in $\mathscr{M}^2_h$, which are used often 
as a convenient replacement for the polar coordinates. The drawback it that projective coordinates are 
not globally defined over the entire front face. Near the top corner of the front face ff, projective coordinates are given by
\begin{align}\label{top-coord}
\rho=\sqrt{t}, \  \xi=\frac{x}{\rho}, \ \widetilde{\xi}=\frac{\wx}{\rho}, \ u=\frac{y-\wy}{\rho}, \ z, \ \wy, \ \wz.
\end{align}
With respect to these coordinates, $\rho, \xi, \widetilde{\xi}$ are in fact the defining 
functions of the boundary faces ff, rf and lf respectively. For the bottom right corner of the front face, projective coordinates are given by
\begin{align}\label{right-coord}
\tau=\frac{t}{\wx^2}, \ s=\frac{x}{\wx}, \ u=\frac{y-\wy}{\wx}, \ z, \ \wx, \ \wy, \ \widetilde{z},
\end{align}
where in these coordinates $\tau, s, \widetilde{x}$ are
the defining functions of tf, rf and ff respectively. 
For the bottom left corner of the front face,
projective coordinates are obtained by interchanging 
the roles of $x$ and $\widetilde{x}$. We illustrate some of these projective 
coordinates in the Figure \ref{heat-incomplete2}. \medskip

\begin{figure}[h]
\begin{center}
\begin{tikzpicture}[scale=1.2]

\draw (5,0.7) -- (5,2);
\draw(4.3,-0.5) -- (3,-1);
\draw (5.7,-0.5) -- (7,-1);
\draw (5,0.7) .. controls (4.5,0.6) and (4.3,0) .. (4.3,-0.5);
\draw (5,0.7) .. controls (5.5,0.6) and (5.7,0) .. (5.7,-0.5);
\draw (4.3,-0.5) .. controls (4.5,-0.6) and (4.6,-0.7) .. (4.7,-0.7);
\draw (5.7,-0.5) .. controls (5.5,-0.6) and (5.4,-0.7) .. (5.3,-0.7);
\draw (4.7,-0.7) .. controls (4.7,-0.3) and (5.3,-0.3) .. (5.3,-0.7);
\draw (4.7,-1.4) .. controls (4.7,-1) and (5.3,-1) .. (5.3,-1.4);
\draw (5.3,-0.7) -- (5.3,-1.4);
\draw (4.7,-0.7) -- (4.7,-1.4);

\node at (5, -1.7) {$\td$};
\node at (5,0.1) {$\ff$};

\draw[thick, dotted, <-] (4.95,0.85) .. controls (4.5,0.8) and (4.2,0.3) .. (4.1,-0.5);
\draw[thick, dotted, ->] (5.05,0.85) .. controls (5.5,0.8) and (5.8,0.3) .. (5.9,-0.5);
\draw[thick, dotted, ->] (4.2,-0.6) .. controls (4.3,-1) and (5.7,-1) .. (5.8,-0.6);

\node at (5.4,1.8) {$\rho$};
\node at (4.5,0.95) {$\xi$};
\node at (6.1,-0.2) {$\tau$};
\node at (5.8, -1) {$s$};

\node at (7,0.95) {$\rf = \{\xi = 0\}$};
\node at (7.16,0.55) {$= \{s= 0\}$};
\node at (2.9,0.2) {$\ff = \{\rho = 0\}$};

\end{tikzpicture}
\end{center}
\caption{The heat-space $\mathscr{M}^2_h$.}
\label{heat-incomplete2}
\end{figure}

Projective coordinates 
on $\mathscr{M}^2_h$ near temporal diagonal are given by 
\begin{align}\label{d-coord}
\eta=\frac{\sqrt{t}}{\wx}, \ S =\frac{(x-\wx)}{\sqrt{t}}, \ 
U= \frac{y-\wy}{\sqrt{t}}, \ Z =\frac{\wx (z-\wz)}{\sqrt{t}}, \  \wx, \ 
\wy, \ \widetilde{z}.
\end{align}
In these coordinates, tf is defined as the limit $|(S, U, Z)|\to \infty$, 
ff and td are defined by $\widetilde{x}, \eta$, respectively. 
The blow-down map $\beta: \mathscr{M}^2_h\to M^2_h$ is in 
local coordinates simply the coordinate change back to 
$(t, (x,y, z), (\widetilde{x},\wy, \widetilde{z}))$. \medskip

The fundamental solution $e^{-t\Delta_L}$ of the Lichnerowicz Laplacian $\Delta_L$ is constructed exactly as in \cite{MazVer}. 
In order to indicate the basic idea, consider the lift $\beta^* (x^2 \Delta_L)$ of $x^2 \Delta_L$
to the heat-space $\mathscr{M}^2_h$. This amounts to writing the differential operator e.g. in 
projective coordinates \eqref{right-coord}. The restriction of $\beta^* (x^2 \Delta_L)$ to the front face 
does not differentiate in $\wy \in B$, which can be viewed as a parameter\footnote{In fact ff is a fibration over $B$ and
we consider the restriction of $\beta^* (x^2 \Delta_L)$ to the fibres of ff. } and is given 
for each fixed $\wy$ by the so-called normal operator
\begin{align}\label{product-heat}
N(x^2\Delta_L)_{\wy} := \beta^* (x^2 \Delta_L) \restriction \ff 
= s^2 \left( \Delta^{\mathscr{C}}_{L,\wy} + \Delta^{\R^b}_{L,\wy}\right),
\end{align}
where $\Delta^{\mathscr{C}}_{L}$ is the Lichnerowicz Laplacian on the model cone 
$\mathscr{C}(F)$ acting in the variables $(s,z)$ and defined with respect to the metric
$ds^2 + s^2 g_F$. The other summand $\Delta^{\R^b}_{L,\wy}$ is the Lichnerowicz 
Laplacian acting in the variable $u \in \R^b \cong T_{\wy} B$, 
defined with respect to the metric $g^B(\wy)$ on $T_{\wy} B$. \medskip

The heat kernel of $\Delta^{\R^b}_{L,\wy}$ acting on $u\in \R^b$ is constructed in the classical way. 
The heat kernel of $\Delta^{\mathscr{C}}_{L}$ on the model cone is
obtained as follows. $\Delta^{\mathscr{C}}_L$ is computed in \eqref{reg-sing} and
reduces over $\lambda$-eigenspaces $E_\lambda= \langle \phi_\lambda \rangle$ of the tangential operator 
$\square_L$ to a scalar multiplication operator, acting on $u \phi_\lambda$ with $u\in C^\infty_0(0,1)$ by 
\begin{align}
\Delta^{\mathscr{C}}_L \left( u \phi_\lambda \right)
= \left( -\partial^2_s - \frac{f}{s}\partial_s + \frac{\lambda}{s^2}
\right) u \cdot \phi_\lambda =: (\ell_\mu u) \phi_\lambda,
\end{align}
with $\mu^2 = \lambda + (f-1)^2/4 \geq 0$. 
Consequently, the heat kernel of $\Delta^{\mathscr{C}}_L$ is given by the following sum
\begin{align}\label{Bessel-heat-spin}
e^{-t\Delta^{\mathscr{C}}_L} (s,z, \widetilde{s}, \wz) = \sum_{\mu(\lambda)} e^{-t\ell_{\mu(\lambda)}} (s,\widetilde{s})
\,  \phi_\lambda(z) \otimes \phi_\lambda(\wz). 
\end{align}
From the explicit expression \eqref{model-heat-kernel}  we obtain 
its asymptotics as $s\to 0$ \footnote{By symmetry the same asymptotics holds as $\widetilde{s} \to 0$.}
\begin{equation}\label{model-expansion}
\begin{split}
e^{-t\Delta^{\mathscr{C}}_L} (s,z, \widetilde{s}, \wz) &\sim
\sum_{\lambda \, \in \, \textup{Spec} \, \square_L} a_{\lambda,k}(t, \widetilde{s}, z, \wz) \, s^{ \, \mu(\lambda) - \frac{(f-1)}{2} + 2k} \\
&\sim \sum_{\lambda  \, \in \, \textup{Spec} \, \square_L} \sum_{k=0}^\infty
a_{\lambda,k}(t, \widetilde{s}, z, \wz) \, s^{\sqrt{\lambda +  \left(\frac{f-1}{2}\right)^2} - \frac{(f-1)}{2} + 2k}
 \end{split}
\end{equation}
Due to the direct sum decomposition in \eqref{product-heat}, the heat equation at the 
front face admits a fundamental solution $N(e^{-t\Delta_L})_{\wy}$ obtained exactly 
as in \cite[(3.10)]{MazVer} as a direct sum of the heat kernel for $\Delta^{\mathscr{C}}_{L}$ and the heat kernel
of $\Delta^{\R^b}_{L,\wy}$ 
\begin{align}\label{Bessel-heat-spin}
N(e^{-t\Delta_L}) (\tau, s,z,\wz,u)_{\wy}  := e^{-\tau \, \Delta^{\mathscr{C}}_{L}}  (s, z, \widetilde{s}=1, \wz) 
\, e^{-\tau \, \Delta^{R^b}_{L,\wy}} (u,\widetilde{u}=0).
\end{align}
In order to construct the fundamental solution, the normal operator $N(e^{-t\Delta_L})_{\wy}$ 
is extended off the front face and corrected iteratively, which involves composition 
of Schwartz kernels on $\mathscr{M}^2_h$. Following the heat kernel construction 
in \cite{MazVer} verbatim, we arrive at the following result. 

\begin{thm}\label{heat-asymptotics}
Let $(M,g)$ be an incomplete edge manifold of dimension $m$ with an 
admissible edge metric $g$. Then the Lichnerowicz Laplacian $\Delta_L$ acting on symmetric trace-free 2-tensors 
admits a fundamental solution $e^{-t\Delta_L}$ to its heat equation, such that
the lift $\beta^*e^{-t\Delta_L}$ is a polyhomogeneous function on $\mathscr{M}^2_h$ 
taking values in $S_0 \boxtimes S_0$ with $S_0 = \textup{Sym}^2_0({}^{ie}T^*M)$ and the index sets 
$(-m+\N_0, 0)$ at ff, $(-m+\N_0, 0)$ at td, vanishing to infinite order at tf. The index set at rf and lf is given 
explicitly by $E+\N_0$ where
\begin{align}\label{E-index-definition}
E=\left\{ \mu = \sqrt{\lambda + \left(\frac{f-1}{2}\right)^2} - \left(\frac{f-1}{2}\right) 
\mid \lambda \in \textup{Spec}(\square_L)\right\}.
\end{align}
\end{thm}

For convenience of the reader, let us note that e.g. in projective coordinates \eqref{right-coord} 
and \eqref{d-coord} the index sets at the boundary faces ff, rf as well as td, indicate the following asymptotic expansions

\begin{equation}
\begin{split}
& \beta^* e^{-t\Delta_L} 
\sim \sum_{j=0}^\infty a_j(\tau, s, z, \wz, u, \wy) \,  \wx^{-m+j}, \ \textup{as} \ \wx \to 0, \\
& \beta^* e^{-t\Delta_L} 
\sim \sum_{j=0}^\infty b_j(S, U, Z, \wx, \wy, \wz) \, \eta^{-m+j}, \ \textup{as} \ \eta \to 0, \\
& \beta^* e^{-t\Delta_L} 
\sim \sum_{\lambda  \, \in \, \textup{Spec} \, \square_L} \sum_{j=0}^\infty
c_{\lambda,j}(\tau, \widetilde{s}, z, \wz) \, s^{\sqrt{\lambda +  \left(\frac{f-1}{2}\right)^2} - \frac{(f-1)}{2} + j},
\ \textup{as} \ s \to 0.
 \end{split}
\end{equation}
While we do not repeat the heat kernel construction of \cite{MazVer} here, let us indicate some fundamental 
reasons for the asymptotics at the various boundary faces. The negative leading order $(-m)$ of asymptotics 
of the fundamental solution at the front face (as $\wx \to 0$) is a consequence of the fact that the heat kernel 
$H$ on an the model edge $\mathscr{C}(F) \times \R^b$ with metric $dx^2+ x^2 g_F + dy^2$ is 
homogeneous of order $(-m)$ with ($r>0$)
$$
H(r^2 t, rx, z, r\wx, \wz, r(y-\wy)) = r^{-m} H(t,x, z, \wx, z, (y-\wy)).
$$ 
The negative leading order $(-m)$ of asymptotics
of the fundamental solution at the temporal diagonal (as $\eta \to 0)$ is due to the initial conditions at $t=0$ of the 
homogeneous heat equation in \eqref{heat-hom-inhom}. Finally, the expansion at rf comes from the asymptotics \eqref{model-expansion}.

\begin{remark}\label{isospectrality}
In case of the $2$-tensor $g_F$ on $\partial M$ restricting to a smooth variable family of Riemannian metrics on fibres $F$,
that is not necessarily isospectral with respect to the tangential operator $\square_L$,
the arguments and the main statements of this work continue to hold. However, the statement of Theorem \ref{heat-asymptotics}
has to be adapted in that case: the fundamental solution $e^{-t\Delta_L}$ still admits a polyhomogeneous 
expansion at the front face ff and the temporal diagonal td of same order as before, however has a rather complicated behaviour at lf and rf.
\end{remark}

Similar result in \cite{MazVer} constructs the heat kernel of the Laplace Beltrami
operator on $(M,g)$ as a polyhomogeneous function on $\mathscr{M}^2_h$ with an index set
$E'+ \N_0$ at rf and lf, defined similarly in terms of the spectrum of $\square'_L$.

\begin{remark}
Tangential stability introduced in Definition \ref{assump-spectrum} is in fact 
equivalent to asking for a lower bound of $E$ and $E'$. More precisely, 
the minimal elements $\mu_0 \in E$ and $\mu_1 \in E'\backslash \{0\}$ are given by 
\begin{equation}\label{minimal-elements-definition}
\mu_0 := \sqrt{u_0 + \left(\frac{f-1}{2}\right)^2} - \left(\frac{f-1}{2}\right), \quad
\mu_1 := \sqrt{u_1 + \left(\frac{f-1}{2}\right)^2} - \left(\frac{f-1}{2}\right).
\end{equation}
Clearly, $\mu_0 = \min (E+\N_0) = \min E_\rf$, however in general $\mu_1$ need not be equal 
to the minimum of the index set $E'_\rf\backslash \{0\} = (E'+\N_0)\backslash \{0\}$. In fact, $\mu_1= \min 
E'_\rf\backslash \{0\}$ without any restrictions only if $\mu_1 \in (0,1]$. Otherwise, $\mu_1$
is the minimal element of the index set $E'_\rf$ of the 
heat kernel for the Laplace Beltrami operator at rf and lf, 
if the edge is a sufficiently higher order perturbation of
a trivial fibration of exact cones. To make this precise, recall the notation of 
Definition \ref{admissible}. Assume that $\partial M \cong F \times B$ and 
the fibration $\phi: (\partial M, g_F \oplus g_B) \to (B,g_B)$ is the obvious projection 
onto the second factor. Assume that in the tubular neighborhood 
$\U \cong \mathscr{C}(F) \times B$ of the edge singularity, the edge metric is given by 
$g=\overline{g} + h$ with 
\begin{equation}
\overline{g} \mid_{\U} = dx^2 \oplus x^2 g_F \oplus g_B,
\end{equation}
and $h$ a symmetric $2$-tensor such that $|h|_{\overline{g}} = O(x^N)$ as $x\to 0$
for some $N \in \N$. Let us write $\N_N:= \{0\} \cup \{q \in \N \mid q \geq N\}$. Then 
$E'_\rf = E'+ \N_N$, since $|h|_{\overline{g}} = O(x^N)$ as $x\to 0$. Consequently,
$\mu_1 = \min E'_\rf \backslash \{0\}$ if $|h|_{\overline{g}} = O(x^N)$ as 
$x\to 0$ with $N \geq \mu_1$.
\end{remark}

We conclude the section with an observation that assuming
non-negativity of the Lichnerowicz Laplacian $\Delta_L$
acting on symmetric trace-free 2-tensors, the 
fundamental solution in Theorem \ref{heat-asymptotics} is the
heat operator corresponding to the Friedrichs self-adjoint extension 
of $\Delta_L$. 

\begin{thm}\label{essential}
Assume that $(M,g)$ is tangentially stable and  
$\Delta_L$ acting on $C^\infty_0(M,S_0)$ is non-negative. 
Then the Friedrichs self-adjoint extension of $\Delta_L$ is non-negative
as well and the fundamental solution $e^{-t\Delta_L}$ is the corresponding 
heat operator. 
\end{thm}

\begin{proof}
Consider the maximal and minimal domains for $\Delta_L$ acting on $C^\infty_0(M,S_0)$
\begin{equation}
\begin{split}
\dom_{\max}(\Delta_L) := \{ &\w \in L^2(M,S_0) \mid \Delta_L \w \in L^2(M,S_0)\}, \\
\dom_{\min}(\Delta_L) := \{ &\w \in \dom_{\max}(\Delta_L) \mid \exists (\w_n) \subset C^\infty_0(M,S_0):
\\ &\w_n \xrightarrow{L^2} \w, \quad \Delta_L \w_n \xrightarrow{L^2} \Delta_L \w\}.
\end{split}
\end{equation}
Exactly as worked out in the joint work of the author with Mazzeo \cite[Lemma 2.2]{MazVer}, 
see also \eqref{max-expansion-model} for the explicit model cone case, 
any $\w \in \dom_{\max}(\Delta_L)$ admits a weak asymptotic expansion 
\begin{equation}
\w = \sum_{\mu(\lambda) \in E \cap [0,1)} c^+_\lambda (\w) x^{\mu(\lambda) - \frac{(f-1)}{2}} + 
\sum_{\mu(\lambda) \in E \cap [0,1)} c^-_\lambda (\w) x^{-\mu(\lambda) - \frac{(f-1)}{2}} + \widetilde{\w},
\end{equation}
where $x\in (0,1)$, $E$ is the index set defined in \eqref{E-index-definition}, 
$\widetilde{\w} \in \dom_{\min}(\Delta_L)$ and the coefficients $c^\pm_\lambda$ are 
of negative regularity, i.e. there is an expansion of the pairing $\int_B \w(x,y,z) \phi(y) dy$ for any 
test function $\phi \in C^\infty(B)$. The expansion above is simpler than the one in 
\cite[Lemma 2.2]{MazVer} due tangential stability, so that each 
$\mu(\lambda) \in E \cap [0,1) \equiv E \cap [\mu_0, 1)$ is positive. \medskip

Assuming that $\Delta_L$ acting on $C^\infty_0(M,S_0)$ is non-negative, symmetric and
densely defined, there exists its Friedrichs self-adjoint extension $\Delta_L^{\mathscr{F}}$ with the same lower
bound, which is due to Friedrichs and Stone, see Riesz and Nagy \cite[Theorem on p. 330]{RN}. 
The domain of $\Delta_L^{\mathscr{F}}$ has been identified in \cite[Proposition 2.5]{MazVer} 
by specifying coefficients $c^\pm_\lambda (\w)$ as follows (see \eqref{model-cone-domain} for the 
explicit model cone case)
\begin{equation}
\dom(\Delta_L^{\mathscr{F}}) = \{ \w \in \dom_{\max}(\Delta_L) \mid c^-_\lambda (\w) = 0, 
\ \textup{for} \ \mu(\lambda) \in E \cap [0,1)\}.
\end{equation}
Using the asymptotic description of the Schwartz kernel for $e^{-t\Delta_L}$, one finds
that the fundamental solution maps into $\dom(\Delta_L^{\mathscr{F}})$ for any fixed $t>0$.
Now a verbatim repetition of the arguments for \cite[Proposition 3.4]{MazVer}
proves the statement.
 \end{proof}

We remark that by following the argument of Gell-Redmann and Swoboda \cite[Proposition 13]{Swoboda}
one may deduce that $\Delta_L$ is essentially self-adjoint if $u_0 >\dim F$ from the mapping properties 
of the fundamental solution. This can be intuitively expected, since the condition  $u_0 >\dim F$ translates to 
$\mu_0 > 1$ in view of \eqref{minimal-elements-definition}, and the operators $\ell_\mu$ are in the
limit point case at $x=0$ for $\mu \geq \mu_0 > 1$.

\section{Mapping properties of the Lichnerowicz heat operator}\label{mapping-section}

We continue under the assumption of tangential stability introduced in Definition \ref{assump-spectrum}
and study $\Delta_L$ acting on 
trace-free symmetric $2$-tensors $\w \in C^\infty_0(M\times [0,T], S_0)=\textup{Sym}_0^2({}^{ie}T^*M)$.
We denote its fundamental solution (also referred to as the heat operator) by $H$.
We also fix any $\delta>0$. Our main result in this section is the following theorem.

\begin{thm}\label{main-est1} 
Consider an edge manifold $(M,g)$ with an admissible 
edge metric $g$ satisfying tangential stability as in Definition \ref{assump-spectrum}. 
Consider the index set at the right and left face as in Theorem \ref{heat-asymptotics}, 
with the minimal element $\mu_0 > 0$. Fix any $\gamma \in (1-\dim F, \mu_0)$ and 
any H\"older exponent $\A=1/N \in (0, \mu_0-\gamma)\cap (0,1)$,
for $N\in \N$ sufficiently large. Recall the notation $S_0=\textup{Sym}_0^2({}^{ie}T^*M)$. 
Then the Lichnerowicz heat operator defines a bounded mapping between weighted H\"older spaces
$($for any $\varepsilon \in (0,1])$
\begin{equation}\label{two-mappings}
\begin{split}
&H: \hok (M\times [0,T], S_0)_{-2+\gamma} \to \mathcal{C}^{k+2,\A}_{\textup{ie}}(M\times [0,T], S_0)_\gamma, \\
&H: \mathcal{C}^{k+1,\A}_{\textup{ie}}(M\times [0,T], S_0)_{-2+\gamma+\varepsilon} \to 
t^{\frac{\varepsilon}{2}} \mathcal{C}^{k+2,\A}_{\textup{ie}}(M\times [0,T], S_0)_{\gamma},
\end{split}
\end{equation}
where $t^{\frac{\varepsilon}{2}} \mathcal{C}^{k+2,\A}_{\textup{ie}, \gamma}
= \{ \, \w \mid \exists \, u \in  \mathcal{C}^{k+2,\A}_{\textup{ie}, \gamma} :  \w(t,p) = t^{\frac{\varepsilon}{2}} u(t,p)\}$.
\end{thm}

\begin{proof} 
The statement is proved using the microlocal properties of the heat kernel lifted to
the blowup space $\mathscr{M}^2_h$, where $\rho_*$ shall denote a defining function of the boundary face 
$*$ in $\mathscr{M}^2_h$. \medskip

We mimic a similar statement in \cite{BV} which is proved using stochastic completeness for the heat kernel of the 
Laplace Beltrami operator. We consider here the Lichnerowicz Laplacian and 
are not aware of any equivalent of stochastic completeness 
on tensors. This requires to some extent different analytic arguments. 
We do not write out the argument for the second statement, 
which follows by similar estimates, since better $x$-weight and higher regularity of the starting
space yield additional $(\rho_\td \rho_\ff)^\varepsilon \leq C \sqrt{t}^\varepsilon$. \medskip

When performing the estimates we will use Corollary \ref{mean-cor} and pretend notationally
that $F$ and $B$ are one-dimensional. The estimates in the general case 
are performed verbatim. Moreover, we will always denote uniform positive constants appearing 
in our estimates by $C>0$, even though they might differ from estimate to estimate. We only use 
derivatives in space, since due to the heat equation, 
differentiation of the heat kernel in time can be replaced by the Lichnerowicz Laplacian. Finally,
we will assume without loss of generality that $k=0$. General $k\in \N$ affects estimates near td,
where we may pass derivatives of the heat kernel to derivatives of the section using integration by parts.
\medskip

Let $X$ denote the operator acting on symmetric $2$-tensors by multiplication with the radial function $x:\U \to [0,1)$, extended 
smoothly to $M$ such that $x \restriction M\backslash \U \geq 1$. Then the first mapping in \eqref{two-mappings}
with $k=0$, is bounded if and only if 
\begin{align}\label{without-weights}
X^{-\gamma} \circ e^{-t\Delta_L} \circ X^{-2+\gamma}: 
\ho (M\times [0,T], S_0)_{0} \to \mathcal{C}^{2,\A}_{\textup{ie}}(M\times [0,T], S_0)_0.
\end{align}
Consider any $v \in \ho(M\times [0,T], S_0)_{0}$. 
Then in view of the Definition \ref{funny-spaces}, the map in \eqref{without-weights} is bounded if and only if
for $\w := X^{-\gamma} \circ e^{-t\Delta_L} \circ X^{-2+\gamma} v$ the
supremum and the H\"older norms
$$
\| X^{-\A}\w \|_\infty, \| X^{-\A} \V \w \|_\infty, \| X^{-\A} \V^2 \w \|_\infty, \| \w \|_\A, \| \V \w \|_\A, \| \V^2 \w \|_\A, 
$$
are bounded by the norm of $v$, up to a constant that depends only on $\Delta_L$. 
Here $\V$ refers to differentiation given by edge vector fields. It suffices to bound 
$\| X^{-\A} \V^2 \w \|_\infty$ and $\| \V^2 \w \|_\A$, since the other norms with only first order
or no differentiation at all, are estimated along the same lines. \medskip

Set $G:= X^{-\gamma} \circ \V^2 e^{-t\Delta_L} \circ X^{-2+\gamma}$. Then by definition
$\| \V^2 \w \|_\A$ is bounded if and only if in addition to the supremum norms, 
for any local coordinate patch $U$, which is also a trivializing neighborhood of $S_0$,
we have an estimate of the form
\begin{equation}\label{Hoelder-estimate}
\sup\limits_{U\times [0,T]} \frac{\left\| G v (p,t) - G v (p',t')\right\|}{d_M(p,p')^\A + |t-t'|^{\frac{\A}{2}}}  
 \leq C \| v \|'_{\A,0},
\end{equation}
for some uniform constant $C>0$, where we have refined the atlas of $M$, such that any 
coordinate patch $U$ is a trivializing neighborhood of $S_0$ and the tuples $(p,p')\in M^2$ 
lie inside the same coordinate patch $U$. Note that \eqref{Hoelder-estimate} holds if the following
two estimates hold
\begin{equation}\label{Hoelder-estimate2}
\begin{split}
&\left\| G v (p,t) - G v (p',t)\right\| \leq C \| v \|'_{\A,0} \, d_M(p,p')^\A, \\
&\left\| G v (p,t) - G v (p,t')\right\| \leq C\| v \|'_{\A,0} \, |t-t'|^{\frac{\A}{2}}.
\end{split}
\end{equation}
The proof is now structured as follows. In \S \ref{proof1} we prove the first estimate 
of \eqref{Hoelder-estimate2}, the so-called H\"older estimate in space. In \S \ref{proof2} we prove the second estimate 
of \eqref{Hoelder-estimate2}, the so-called H\"older estimate in time. In \S \ref{proof3} we 
estimate the supremum norm $\| X^{-\A} \V^2 \w \|_\infty$. At various steps in the estimates we are motivated by 
the corresponding estimates in \cite{LSU}.

\subsection{H\"older differences in space}\label{proof1}

We can always arrange for either $x \restriction U\geq 1/2$ or $U \subset \U$ being of the form $U \cong (0,1) \times Y$
with $Y\subset \partial M$ and $S_0\restriction U \cong (0,1) \times S_Y$. 
We model our estimates after a similar analysis in \cite{BV} and begin by introducing a notation 
\begin{align*}
&U_+:= \{\widetilde{p} \in U \mid d_M(p, \widetilde{p}) \leq d_M(p,p')\}, \\
&U_-:= \{\widetilde{p} \in U \mid d_M(p, \widetilde{p}) \geq d_M(p,p')\}. 
\end{align*}
Given a coordinate patch $U$ with $x \restriction U \geq 1/2$, which trivializes $S_0$ by assumption,
we extend the restriction $v(p)$ of the section $v$ to the fibre over $p\in U$ 
to a constant function over $U$. Otherwise $U \subset \U$ is the form $U \cong (0,1) \times Y$
and $S_0\restriction U \cong (0,1) \times S_Y$ and for $p=(x,y,z) \in U$ we extend the restriction 
$v(x,y,z) \in S_p$ to all of $U$ constantly only in the $(y,z) \in Y$ direction. We may now write
\begin{align*}
&G v (p,t) - G v (p',t) \\
&= \int_0^t \int_{U_+} (G(t-\wt, p,\widetilde{p}) - 
G(t-\wt, p',\widetilde{p})) (v(\wt, \widetilde{p})-v(\wt, p)) \, d\wt \, \textup{dvol}_g(\widetilde{p})\\
&+ \int_0^t \int_{U_-} (G(t-\wt, p,\widetilde{p}) - 
G(t-\wt, p',\widetilde{p})) (v(\wt, \widetilde{p})-v(\wt, p)) \, d\wt \, \textup{dvol}_g(\widetilde{p})
\end{align*}
\begin{align*}
& + \int_0^t \int_{U} (G(t-\wt, p,\widetilde{p}) - 
G(t-\wt, p',\widetilde{p})) \, v(\wt, p) \, d\wt \, \textup{dvol}_g(\widetilde{p})\\
& + \int_0^t \int_{M\backslash U} (G(t-\wt, p,\widetilde{p}) - 
G(t-\wt, p',\widetilde{p})) \, v(\wt, \widetilde{p}) \, d\wt \, \textup{dvol}_g(\widetilde{p})\\
& \qquad \qquad \qquad =: L_1 + L_2 + L_3 + L_4.
\end{align*}
Note that the endomorphism $G(\cdot, \widetilde{p})$ can be applied to the vector $v(\wt, p)$
only for $\widetilde{p}$ and $p$ lying in the same coordinate patch $U$ with the corresponding
local trivialization of the vector bundle $S_0$. This explains why we have separated
out the integral $L_4$. 

\begin{remark}\label{why-such-spaces}
At this point we would like to explain the reason for the definition of spaces
\eqref{funny-spaces} with different weights assigned to the H\"older and the supremum norms.
Recall that $v \in \ho_{,0} = \ho \, \cap \, x^{\A} \mathcal{C}^0_{\textup{ie}}$. The
terms $L_1$ and $L_2$ contain differences of $v$, and hence using H\"older regularity 
$v \in \ho$ one obtains an improvement by $\rho_\ff^\A$ in the estimates of the integrands at the 
front face. However, in the terms $L_3$ we do not have differences of $v$ and hence H\"older regularity
of $v$ does not play a role in the estimates. Rather, we use $v \in x^{\A} \mathcal{C}^0_{\textup{ie}}$
and the $x^\A$-weight still provides an improvement by $\rho_\ff^\A$ in the estimates of the integrand at the 
front face. 
\end{remark}

The second term $L_2$ is now estimated exactly as the term $I_3$ in \cite[\S 3.1]{BV}.
In fact the estimates here are even easier using Corollary \ref{mean-cor} and better front face behaviour. \medskip

We rewrite the first term $L_1$ as follows
\begin{align*}
L_1  &= \int_0^t \int_{U_+}  G(t-\wt, p,\widetilde{p}) (v(\wt, \widetilde{p}) - v(\wt, p))  \, d\wt \, \textup{dvol}_g(\widetilde{p})\\
& - \int_0^t \int_{U_+}  G(t-\wt, p',\widetilde{p}) (v(\wt, \widetilde{p}) - v(\wt, p'))  \, d\wt \, \textup{dvol}_g(\widetilde{p})\\
& - \int_0^t \int_{U_+}  G(t-\wt, p',\widetilde{p}) (v(\wt, p') - v(\wt, p))  \, d\wt \, \textup{dvol}_g(\widetilde{p}) \\
& \qquad \qquad \qquad =: L_{11} + L_{12} + L_{13}.
\end{align*}
Both, $L_{11}$ and $L_{12}$ are estimated precisely as the term $I_1$ in \cite[\S 3.1]{BV}.
The third summand $L_{13}$ is estimated as the term $I_4$ in \cite[\S 3.1]{BV}. \medskip

It remains to estimate the terms $L_3$ and $L_4$ from above. We begin with the easier term $L_4$. Recall that 
for estimating the H\"older norm, we may assume without loss of generality that the two fixed points 
$p=(x,y,z)$ and $p'=(x',y',z')$ lie in the coordinate neighborhood $U$. Since the estimates away from 
the singular neighborhood $\U$ are classical, we may also assume that $U \subset \U$. Then we may write 
\begin{align*}
L_4 &= |x-x'| \int_0^t\int_{M\backslash U} \partial_\xi G (t-\wt, \xi, y, z, \widetilde{p}) \, 
v(\wt, \widetilde{p}) \, d\wt \, \textup{dvol}_g(\widetilde{p}) \\ 
&+ |y-y'| \int_0^t\int_{M\backslash U} \partial_\gamma G (t-\wt, x', \gamma, z, \widetilde{p}) \, 
v(\wt, \widetilde{p}) \, d\wt \, \textup{dvol}_g(\widetilde{p}) \\ 
&+ |z-z'| \int_0^t\int_{M\backslash U} \partial_\zeta G (t-\wt, x', y', \zeta, \widetilde{p}) \, 
v(\wt, \widetilde{p}) \, d\wt \, \textup{dvol}_g(\widetilde{p}).
\end{align*}
where $(\xi, \gamma, \zeta)$ is a point on the straight connecting line between $(x,y,z)$ and $(x',y',z')$.
Assume that $\widetilde{p}=(\wx, \wy, \wz) \in \U \backslash U$. Then by construction, the distance $d_B(Y,\wy)$
between $Y$ and $\wy$ is uniformly bounded from below for any $Y\in (y, y', \gamma)$. Consequently, we find  
for the integrands in the various coordinate systems \eqref{top-coord}, \eqref{right-coord} and \eqref{d-coord} that 
for some uniform positive constant $C>0$ we have
\begin{align*}
\rho^{-1}_\ff = |u| d_B(Y,\wy)^{-1} \leq C |u|, (\rho_\ff \rho_\td)^{-1} = |U| d_B(Y,\wy)^{-1} \leq C |U|. 
\end{align*}
Since the heat kernel is bounded as $|u|$ and $|U|$ tend to infinity, we conclude that each integrand 
above vanishes to infinite order at the front and temporal diagonal faces. Consequently $L_4$ may be 
bounded in terms of the supremum norm of $v$ and $d_M(p,p')$ up to some uniform constant. 
If $\widetilde{p}\notin \U$, then the heat kernels in the integrals above are supported away from the front 
and temporal diagonal faces in $\mathscr{M}_h^2$, so that the estimates are classical in the same spirit as before.
\medskip

It remains to estimate $L_3$ which occupies the remainder of the subsection. It is here that we need to use
Corollary \ref{mean-cor}. Note that while the previous estimates employed H\"older regularity of $v$, estimation 
of $L_3$ uses only the $x^\A \mathcal{C}^0_{\textup{ie}}$ bound of 
$v \equiv x^\A \w \in \ho(M\times [0,T], S_0)_{0} \subset x^\A \mathcal{C}^0_{\textup{ie}}$.
Hence we consider an operator $G':= G \circ X^\A$ of the following asymptotics 
\begin{align}\label{G-primed}
\beta^*G' = \rho_\ff^{-m-2+\A} \rho_\td^{-m-2} \rho_\rf^{\mu_0-\gamma} 
\rho_\lf^{-2+\mu_0+\gamma+\A} \rho_\tf^\infty \mathcal{G},
\end{align}
where $\mathcal{G}$ is a bounded polyhomogeneous distribution on $\mathscr{M}^2_h$.
The notation $\rho_\tf^\infty$ indicates that the kernel $\beta^*G'$ is vanishing to infinite order at 
the temporal face tf and hence the equation \eqref{G-primed} holds with $\rho_\tf^\infty$ 
replaced by $\rho_\tf^K$ for any $K \in \N$. 
We obtain from Corollary \ref{mean-cor} for $\A = 1/N$ with $N$ sufficiently large
\begin{align*}
&|L_3| \, d_M((x,y,z), (x',y',z'))^{-\A}  
\\ & \leq \| \int_0^t\int_U \xi^{\frac{N-1}{N}} \partial_\xi G' (t-\wt, \xi, y, z, \wx, \wy, \wz) 
\, \w(\wt, x,y,z) \, d\wt \, \textup{dvol}_g(\wx, \wy, \wz) \, \| 
\\ & + \| \int_0^t\int_U \|\gamma -\wy \|^{\frac{N-1}{N}} 
\partial_\gamma G' (t-\wt, x', \gamma, z, \wx, \wy, \wz) \, \w(\wt, x,y,z) \, 
d\wt \, \textup{dvol}_g(\wx, \wy, \wz) \, \|  
\\ & + \| \int_0^t\int_U x'^{-\frac{1}{N}} \partial_\zeta G' (t-\wt, x', y', \zeta, \wx, \wy, \wz) 
\, \w(\wt, x,y,z) \, d\wt \, \textup{dvol}_g(\wx, \wy, \wz) \, \| \\
&\qquad \qquad \qquad \qquad \qquad \qquad \qquad \qquad \qquad \qquad \qquad \qquad \qquad =: I_1 + I_2 + I_3,
\end{align*}
We assume $x<x'$ without loss of generality, otherwise just rename the variables. 
In view of the heat kernel asymptotics established in Theorem \ref{heat-asymptotics} and in view of the particular 
fact that the heat kernel is exponentially vanishing for $\frac{\|\gamma -\wy \|}{\rho_\ff}$ going to infinity, we find
\begin{equation}\label{XH}
\begin{split}
\beta^*(\xi^{\frac{N-1}{N}} \partial_\xi G') &= \rho_\ff^{-m-2} \rho_\td^{-m-3} \rho_\rf^{\mu_0-\gamma-\A} 
\rho_\lf^{-2+\mu_0+\gamma+\A} \rho_\tf^\infty  G_1, \\
\beta^*(\|\gamma -\wy \|^{\frac{N-1}{N}} \partial_\gamma G') &= 
\rho_\ff^{-m-2} \rho_\td^{-m-3} \rho_\rf^{\mu_0-\gamma} 
\rho_\lf^{-2+\mu_0+\gamma+\A} \rho_\tf^\infty  G_2, \\
\beta^*(x'^{-\frac{1}{N}} \partial_\zeta G') &= 
\rho_\ff^{-m-2} \rho_\td^{-m-3} \rho_\rf^{\mu_0-\gamma-\A} 
\rho_\lf^{-2+\mu_0+\gamma+\A} \rho_\tf^\infty  G_3, \\
\end{split}
\end{equation}
where the kernels $G_1,G_2$ and $G_3$ are uniformly bounded at all boundary faces of the heat space blowup $\mathscr{M}^2_h$.
We proceed with estimates of $I_1,I_2$ and $I_3$ by assuming that the heat kernel is compactly supported near the 
corresponding corners of the front face in $\mathscr{M}^2_h$. In order to deal with each integral in a uniform notation,
we write $X:=\xi$, when dealing with $I_1$ and $X:=x'$ otherwise. We write $Y:=y$ when dealing with $I_1$,
$Y:=\gamma$ when dealing with $I_2$ and $Y:= y'$ when dealing with $I_3$. Similarly, we write $Z:= \zeta$
when dealing with $I_3$ and $Z:=z$ otherwise. \medskip

For the purpose of brevity, we omit the estimates at the top corner of ff 
and just point out that the estimates are parallel to those near the lower right corner
with same front face behaviour. We write out the optimal estimates which yield additional weights.

\subsubsection{Estimates near the lower left corner of the front face:}
Let us assume that the heat kernel $H$ is compactly supported near the lower left corner of the front face. 
Its asymptotic behaviour is appropriately described in the following projective coordinates
\begin{align}\label{left-coord}
\tau=\frac{t-\wt}{X^2}, \ s=\frac{\wx}{X}, \ u=\frac{Y-\widetilde{y}}{X}, \ X, \ Y, \ Z, \ \widetilde{z},
\end{align}
where in these coordinates $\tau, s, x$ are the defining functions of tf, lf and ff respectively. 
The coordinates are valid whenever $(\tau, s)$ are bounded as $(t-\wt,x,\wx)$ approach zero. 
For the transformation rule of the volume form we compute
\begin{align*}
\beta^*(d\wt \dv(\wx, \wy, \wz)) =  h \cdot X^{m+2} s^f d\tau \, ds \, du \, d\wz,
\end{align*}
where $h$ is a bounded distribution on $\mathscr{M}^2_h$. 
Hence, using \eqref{XH} we arrive after cancellations at the estimates ($j=1,2,3$)
\begin{equation*}
I_j \leq  \|\w\|_{\infty} \, \int s^{-2+f+\mu_0+\gamma+\A} h \, G_j \, d\tau \, ds \, du \, d\wz 
\leq C \, \|\w\|_{\infty}
\end{equation*}
for some uniform constant $C>0$. Summing up, we conclude
\begin{align}
|I_1+I_2+I_3| \leq C \|\w\|_{\infty}.
\end{align}

\subsubsection{Estimates near the lower right corner of the front face:}
Let us assume that the heat kernel $H$ is compactly supported near the lower right corner of the front face. 
Its asymptotic behaviour is appropriately described in the following projective coordinates
\begin{align*}
\tau=\frac{t-\wt}{\wx^2}, \ s=\frac{X}{\wx}, \ u=\frac{Y-\widetilde{y}}{\wx}, \ Z, \wx, \ Y, \ \widetilde{z},
\end{align*}
where in these coordinates $\tau, s, \wx$ are the defining functions of tf, rf and ff respectively. 
The coordinates are valid whenever $(\tau, s)$ are bounded as $(t-\wt,x,\wx)$ approach zero. 
For the transformation rule of the volume form we compute
\begin{align*}
\beta^*(d\wt \dv(\wx, \wy, \wz))=h \cdot \wx^{m+1} d\tau \, d\wx \, du \, d\wz,
\end{align*}
where $h$ is a bounded distribution on $\mathcal{M}^2_h$. 
Hence we obtain using 
\eqref{XH} and $\A < (\mu_0-\gamma)$ after cancellations ($j=1,2,3$)
\begin{equation*}
\begin{split}
I_j &\leq \|\w\|_{\infty} \int s^{\mu_0-\gamma-\A} \, \wx^{-1} h \, G_j \, d\tau \, d\wx \, du \, d\wz \\
&\leq  \|\w\|_{\infty} \, X^{\mu_0-\gamma-\A} \int 
\wx^{-1-(\mu_0-\gamma-\A)} \, G_j \, d\tau \, d\wx \, du \, d\wz \\
&\leq  C\, \|\w\|_{\infty} \, X^{\mu_0-\gamma-\A} \int_X^\infty
\wx^{-1-(\mu_0-\gamma-\A)} \, d\wx
\leq  C\, \|\w\|_{\infty},
\end{split}
\end{equation*} 
for some uniform constant $C>0$. Summing up, we conclude 
\begin{align}
|I_1+I_2+I_3| \leq C \, \|\w\|_{\A} 
\end{align}

\subsubsection{Estimates where the diagonal meets the front face:} 
We assume that the heat kernel $H$ is compactly supported near the intersection of the 
temporal diagonal td and the front face. Its asymptotic behaviour is conveniently described 
using the following projective coordinates
\begin{align}
\eta^2=\frac{t-\wt}{x^2}, \ S =\frac{(x-\wx)}{\sqrt{t-\wt \ }}, \ 
U= \frac{y-\wy}{\sqrt{t-\wt \ }}, \ Z =\frac{x (z-\wz)}{\sqrt{t-\wt \ }}, \  x, \ 
\wy, \ \wz.
\end{align}
In these coordinates tf is the face in the limit $|(S, U, Z)|\to \infty$, 
ff and td are defined by $x, \eta$, respectively. 
For the transformation rule of the volume form we compute
\begin{align*}
\beta^*(d\wt \dv(\wx, \wy, \wz))=h \cdot x^{m+2} \eta^{m+1} d\eta \, dS \, dU \, dZ,
\end{align*}
where $h$ is a bounded distribution on $\mathcal{M}^2_h$. 
Consequently, using Theorem \ref{heat-asymptotics} ($j=1,2,3$)
\begin{equation*}
\begin{split}
I_j \leq \int \eta^{-2} G_j(x, \eta, S, U, Z, \wy, \wz) \, \w(\wt, x,y,z)  \, d\eta \, dS \, dU \, dZ, 
\end{split}
\end{equation*}
where $G_j$ is uniformly bounded at the boundary faces of $\mathscr{M}^2_h$.
Since the heat kernel is integrated against a constant $\w(x,y,z)$, the singularity in $\eta$ can be 
cancelled using integration by parts near td, as in the estimate of $I_4$ in \cite[\S 3.1]{BV}. This 
leads to an estimate 
\begin{align*}
|I_1+I_2+I_3| &\leq C \|\w\|_{\infty}.
\end{align*} 

\subsection{H\"older differences in time}\label{proof2}
Consider in the previously set notation $v \in \ho(M\times [0,T], S_0)_0$ and the integral operator $G$. 
For any two fixed time points $t$ and $t'$ (we shall assume $t\geq t'$ without loss of generality), 
as well as any fixed space point $p\in M$ 
we will establish the following estimate
\begin{equation}\label{time-hoelder-difference}
\left\| G v (p,t) - G v (p,t')\right\|  \leq C \| v \|'_{\A,0} |t-t'|^{\frac{\A}{2}},
\end{equation}
for some uniform constant $C>0$. 
We model our estimates after a similar analysis in \cite{BV}. We write
\begin{align*}
&G v (p,t) - G v (p,t') \\
&= \int_0^t \int_{M} G(t-\wt, p,\widetilde{p}) (v(\wt, \widetilde{p})-v(\wt, p)) \, d\wt \, \textup{dvol}_g(\widetilde{p})\\
&- \int_0^{t'} \int_{M} G(t'-\wt, p,\widetilde{p}) (v(\wt, \widetilde{p})-v(\wt, p)) \, d\wt \, \textup{dvol}_g(\widetilde{p})\\
&+ \int_0^t \int_{M} G(t-\wt, p,\widetilde{p}) \, v(\wt, p) \, d\wt \, \textup{dvol}_g(\widetilde{p})\\
&- \int_0^{t'} \int_{M} G(t'-\wt, p,\widetilde{p}) \, v(\wt, p) \, d\wt \, \textup{dvol}_g(\widetilde{p}).
\end{align*}

Let us first assume $t\geq 2t'$. Then $t, t' \leq 2|t-t'|$ and we may estimate the first two
integrals exactly as $J'_1$ and $J'_2$ in \cite{BV}. For the last two integrals we note that the estimates 
at the boundary faces of $\mathscr{M}^2_h$ yield additional powers of $(\sqrt{t})^\A$ and $(\sqrt{t'})^\A$.
Using the fact that $t, t' \leq 2|t-t'|$, we obtain the estimate \eqref{time-hoelder-difference} as well.
Let us now assume $t'\leq t<2t'$. Note that then $(2t'-t)$ is smaller than $t$ and $t'$.
We introduce the following notation 
\begin{align*}
T_+:= [2t'-t, t], \quad T'_+:= [2t'-t, t'], \quad T_-:= [0,2t'-t]. 
\end{align*}
We can now decompose the integrals above accordingly and obtain
\begin{align*}
&G v (p,t) - G v (p,t') \\
&= \int_{T_-} \int_{M} (G(t-\wt, p,\widetilde{p}) - 
G(t'-\wt, p,\widetilde{p})) (v(\wt, \widetilde{p})-v(\wt, p)) \, d\wt \, \textup{dvol}_g(\widetilde{p})\\
&+ \int_{T_+} \int_{M} G(t-\wt, p,\widetilde{p}) (v(\wt, \widetilde{p})-v(\wt, p)) \, d\wt \, \textup{dvol}_g(\widetilde{p}) \\
&- \int_{T'_+} \int_{M} G(t'-\wt, p,\widetilde{p}) (v(\wt, \widetilde{p})-v(\wt, p)) \, d\wt \, \textup{dvol}_g(\widetilde{p})
\end{align*}
\begin{align*}
&+ \int_{T_-} \int_{M} (G(t-\wt, p,\widetilde{p}) - 
G(t'-\wt, p,\widetilde{p})) \, v(\wt, p) \, d\wt \, \textup{dvol}_g(\widetilde{p}) \\
&+ \int_{T_+} \int_{M} G(t-\wt, p,\widetilde{p}) \, v(\wt, p) \, d\wt \, \textup{dvol}_g(\widetilde{p})\\
&- \int_{T'_+} \int_{M} G(t'-\wt, p,\widetilde{p}) \, v(\wt, p) \, d\wt \, \textup{dvol}_g(\widetilde{p})
\\ &=: K_1 + K_2 + K_3 + K_4 + K_5 + K_6.
\end{align*}

Note that as in Remark \ref{why-such-spaces}, with 
$v \in \ho_{,0} = \ho \, \cap \, x^{\A} \mathcal{C}^0_{\textup{ie}}$, 
we use the H\"older regularity $v \in \ho$ in the estimates of $K_1, K_2, K_3$, 
and use an additional $x^\A$-weight in $v \in x^{\A} \mathcal{C}^0_{\textup{ie}}$ 
in the estimates of $K_4, K_5, K_6$.
\medskip

The first term $K_1$ is estimated exactly as the terms $J_3$ in \cite[\S 3.2]{BV}.
The second term $K_2$ is estimated exactly as the term $J_1$ in \cite[\S 3.2]{BV}.
The third term $K_3$ is estimated exactly as the term $J_2$ in \cite[\S 3.2]{BV}.
It remains to estimate the other terms $K_4, K_5, K_6$. Note that for $K_4$ and some
$\theta \in (t',t)$ we obtain with $\A = 1/\N$ as in Lemma \ref{mean-lemma}
\begin{align*}
K_4  = |t-t'|^\frac{\A}{2} \int_0^{2t'-t} \int_{M} (\theta - \wt)^{1-\frac{\A}{2}}
\partial_\tau (G) (\theta-\wt, p,\widetilde{p})
 \, v(\wt, p) \, d\wt \, \textup{dvol}_g(\widetilde{p}). 
 \end{align*}
We proceed in the notation of the previous subsection. The estimates 
use only the $x^\A \mathcal{C}^0_{\textup{ie}}$ bound of 
$v \equiv x^\A \w \in \ho(M\times [0,T], S)_{0} \subset x^\A \mathcal{C}^0_{\textup{ie}}$.
For the purpose of brevity, we omit the estimates at the top corner of ff 
and just point out that the estimates are parallel to those near the lower right corner
with same front face behaviour.

\subsubsection{Estimates near the lower left corner of the front face:}
Note that near the left lower corner of the front face, $x^2\geq (\theta-\wt)$. 
Consequently, $x^{-2}\leq (\theta-\wt)^{-1}$ and in particular for any $\delta > 0$ we find 
\begin{align}\label{delta-estimate}
\int x^{2-\delta} d\tau = \int  x^{-\delta} d\wt \leq \int (\theta-\wt)^{-\frac{\delta}{2}} d\wt. 
\end{align}
We compute after cancellations using \eqref{G-primed}
\begin{align*}
|K_4|   &\leq C |t-t'|^\frac{\A}{2} \|\w\|_\infty \int G_4 \, d\tau \, ds \, du \, d\wz
\leq C |t-t'|^\frac{\A}{2} \|\w\|_\infty, \\
|K_5|  &\leq C \|\w\|_\infty \int x^\A \, G_5 \, d\tau \, ds \, du \, d\wz
\leq C \|\w\|_\infty \int_{2t'-t}^t (\theta-\wt)^{-1+\frac{\A}{2}}  \, d\wt, \\
|K_6|  &\leq C \|\w\|_\infty \int x^\A \, G_6 \, d\tau \, ds \, du \, d\wz
\leq C \|\w\|_\infty \int_{2t'-t}^{t'} (\theta-\wt)^{-1+\frac{\A}{2}}  \, d\wt,
\end{align*}
where all kernels $G_j$ are bounded at the boundary faces of the heat space 
$\mathscr{M}^2_h$. From there we conclude using \eqref{C}
$$
|K_4|+|K_5|+|K_6| \leq C |t-t'|^{\frac{\A}{2}} \|\w\|_\infty.
$$

\subsubsection{Estimates near the lower right corner of the front face:}
We compute after cancellations
\begin{align*}
|K_4|  &\leq C |t-t'|^\frac{\A}{2} \|\w\|_\infty \int \wx^{-1} s^{\mu_0-\gamma} G_4 \, d\tau \, d\wx \, du \, d\wz \\
|K_5|  &\leq C \|\w\|_\infty \int \wx^{-1+\A} s^{\mu_0-\gamma} G_5 \, d\tau \, d\wx \, du \, d\wz \\
|K_6|  &\leq C \|\w\|_\infty \int \wx^{-1+\A} s^{\mu_0-\gamma} G_6 \, d\tau \, d\wx \, du \, d\wz ,
\end{align*}
where all kernels $G_j$ are bounded at the boundary faces of the heat space 
$\mathscr{M}^2_h$. Observe that near the lower right corner we may estimate
$$
\int \left| \wx^{-1-(\mu_0-1)} s^{\mu_0-1} G_j\right| d\wx \leq \textup{const}.
$$
Consequently, we obtain as in \eqref{delta-estimate}
\begin{align*}
|K_4|  &\leq C |t-t'|^\frac{\A}{2} \|\w\|_\infty, \\
|K_5|  &\leq C \|\w\|_\infty \int_{2t'-t}^t (\theta-\wt)^{-1+\frac{\A}{2}} \, d\wt, \\
|K_6|  &\leq C \|\w\|_\infty \int_{2t'-t}^{t'} (\theta-\wt)^{-1+\frac{\A}{2}} \, d\wt.
\end{align*}
From there we conclude using \eqref{C}
$$
|K_4|+|K_5|+|K_6| \leq C |t-t'|^{\frac{\A}{2}} \|\w\|_\infty. 
$$

\subsubsection{Estimates where the diagonal meets the front face:} 
We compute after cancellations 
\begin{align*}
|K_4|  &\leq C |t-t'|^\frac{\A}{2} \int \eta^{-1-\A} G_4(x, \eta, S, U, Z, \wy, \wz) \, \w(\wt, x,y,z)  \, d\eta \, dS \, dU \, dZ, \\
|K_5|  &\leq C \int x^\A \, \eta^{-1} G_5(x, \eta, S, U, Z, \wy, \wz) \, \w(\wt, x,y,z)  \, d\eta \, dS \, dU \, dZ, \\
|K_6|  &\leq C \int x^\A \, \eta^{-1} G_6(x, \eta, S, U, Z, \wy, \wz) \, \w(\wt, x,y,z)  \, d\eta \, dS \, dU \, dZ.
\end{align*}
where all kernels $G_j$ are bounded at the boundary faces of the heat space 
$\mathscr{M}^2_h$. Since the heat kernel is integrated against a constant $\w(x,y,z)$, the singularity in $\eta$ can be 
cancelled using integration by parts near td, as in the estimate of $I_4$ in \cite[\S 3.1]{BV}. This 
leads to an estimate 
\begin{align*}
|K_4|  &\leq C |t-t'|^\frac{\A}{2} \|\w\|_\infty \int \eta^{-\A} G'_4(x, \eta, S, U, Z, \wy, \wz) \, d\eta \, dS \, dU \, dZ, \\
|K_5|  &\leq C \|\w\|_\infty \int x^\A \, G'_5(x, \eta, S, U, Z, \wy, \wz) \, d\eta \, dS \, dU \, dZ, \\
|K_6|  &\leq C \|\w\|_\infty \int x^\A \, G'_6(x, \eta, S, U, Z, \wy, \wz) \, d\eta \, dS \, dU \, dZ,
\end{align*}
where all kernels $G'_j$ are still bounded at the boundary faces of the heat space 
$\mathscr{M}^2_h$. The estimates now follow along the lines of the estimates of $J_1, J_2$
and $J_3$ in \cite[\S 3.2]{BV} near td.

\subsection{Estimates of the supremum}\label{proof3}

Consider as before $\w \in \mathcal{C}^0_{\textup{ie}}(M\times [0,T], S_0)$.  
In this subsection we estimate the supremum norm of the following integral
$$
J:= \int_0^t \int_{M} G'(t-\wt, p,\widetilde{p}) \w(\wt, \widetilde{p}) \, d\wt \, \textup{dvol}_g(\widetilde{p}),
$$
where $G'=X^{-\gamma} \circ \V^2 e^{-t\Delta_L} \circ X^{-2+\gamma+\A}$. As before, we assume that the kernel $G'$ is 
compactly supported near the various corners of the front face in the heat space blowup
$\mathscr{M}^2_h$, where for convenience we write out the corresponding projective coordinates 
once again. The estimates are classical away from the front face and hence we may assume 
that $p=(x,y,z)\in \U$. Moreover, as before it suffices to integrate over the singular neighborhood 
$\U$ with $\widetilde{p} = (\w, \wy, \wz)$, replacing the integration region $M$ in the integral $J$ by $\U$.

\subsubsection{Estimates near the lower left corner of the front face:}
Assume that the integral kernel $G'$ is compactly supported near the lower left corner of the front face. 
We employ as before the following projective coordinates
\begin{align*}
\tau=\frac{t-\wt}{x^2}, \ s=\frac{\wx}{x}, \ u=\frac{y-\widetilde{y}}{x}, \ x, \ y, \ z, \ \widetilde{z}.
\end{align*}
where in these coordinates $\tau, s, x$ are the defining functions of tf, lf and ff respectively. 
For the transformation rule of the volume form we compute
\begin{align*}
\beta^*(d\wt \dv(\wx, \wy, \wz)) =  h \cdot x^{m+2} s^f d\tau \, ds \, du \, d\wz,
\end{align*}
where $h$ is a bounded distribution on $\mathscr{M}^2_h$. 
Hence, using \eqref{XH} we arrive for any $\w\in \ho$ after cancellations at the estimates
\begin{equation*}
\begin{split}
|J| \leq \|\w\|_\infty \int x^\A G''(s, \tau, u, x, y, z, \wz) d\tau \, ds \, du \, d\wz \leq C \, x^\A \, \|\w\|_\infty,
\end{split}
\end{equation*}
for some uniform constant $C>0$ and bounded function $G''$ on $\mathscr{M}^2_h$.

\subsubsection{Estimates near the lower right corner of the front face:}
Assume that the heat kernel $H$ is compactly supported near the lower right corner of the front face. 
We employ as before the following projective coordinates
\begin{align*}
\tau=\frac{t-\wt}{\wx^2}, \ s=\frac{x}{\wx}, \ u=\frac{y-\widetilde{y}}{\wx}, \ \wx, \ y, \ z, \ \widetilde{z}.
\end{align*}
where in these coordinates $\tau, s, x$ are the defining functions of tf, rf and ff respectively. 
For the transformation rule of the volume form we compute
\begin{align*}
\beta^*(d\wt \dv(\wx, \wy, \wz)) =  h \cdot \wx^{m+1} d\tau \, d\wx \, du \, d\wz,
\end{align*}
where $h$ is a bounded distribution on $\mathscr{M}^2_h$. 
Hence, using \eqref{XH} and the fact that $x\leq \wx$ near the lower right corner, 
we arrive for any $\w\in \ho$ after cancellations at the estimates
\begin{equation*}
\begin{split}
|J| &\leq \|\w\|_\infty \int \wx^{-1+\A} s^{\mu_0-\gamma} G''(s, \tau, u, \wx, y, z, \wz) \, d\tau \, d\wx \, du \, d\wz \\
& \leq C \, \|\w\|_\infty \ x^{\mu_0-\gamma} \int_x^\infty \wx^{-1-(\mu_0-\gamma)+\A} d\wx \leq C \, x^\A \, \|\w\|_\infty,
\end{split}
\end{equation*}
for some uniform constant $C>0$ and bounded function $G''$ on $\mathscr{M}^2_h$. 
Note that we used $\A < (\mu_0-\gamma)$ in the estimate above. 

\subsubsection{Estimates near the top corner of the front face:}
Assume that the heat kernel $H$ is compactly supported near the top corner of the front face. 
We employ as before the following projective coordinates
\begin{align*}
\rho=\sqrt{t-\wt}, \  \xi=\frac{x}{\rho}, \ \widetilde{\xi}=\frac{\wx}{\rho}, \ u=\frac{y-\wy}{\rho}, \ y, \ z, \ \wz,
\end{align*}
where in these coordinates, $\rho, \xi, \widetilde{\xi}$ are the defining 
functions of the boundary faces ff, rf and lf respectively. 
For the transformation rule of the volume form we compute
\begin{align*}
\beta^*(d\wt \dv(\wx, \wy, \wz)) =  h \cdot \rho^{m+1} \widetilde{\xi}^f \, d\rho \, d\widetilde{\xi} \, du \, d\wz,
\end{align*}
where $h$ is a bounded distribution on $\mathscr{M}^2_h$. 
Hence, using \eqref{XH} and the fact that $x\leq \rho$ near the lower right corner, 
we arrive for any $\w\in \ho$ after cancellations at the estimates
\begin{equation*}
\begin{split}
|J| &\leq \|\w\|_\infty \int \rho^{-1+\A} \xi^{\mu_0-\gamma} G''(\rho, \xi, \widetilde{\xi}, u, y, z, \wz) \, 
d\rho \, d\widetilde{\xi} \, du \, d\wz \\
& \leq C \, \|\w\|_\infty \ x^{\mu_0-\gamma} \int_x^\infty \rho^{-1-(\mu_0-\gamma)+\A} \, 
d\rho \leq C \, x^\A \, \|\w\|_\infty,
\end{split}
\end{equation*}
for some uniform constant $C>0$ and bounded function $G''$ on $\mathscr{M}^2_h$. 
Note that we used $\A < (\mu_0-\gamma)$ in the estimate above. 

\subsubsection{Estimates where the diagonal meets the front face:}\label{td-parts}
Assume that the heat kernel $H$ is compactly supported where the temporal diagonal meets the front face. 
Before we begin with the estimate, let us rewrite $J$ in following way
\begin{align*}
J &= \int_0^t \int_\U G'(t-\wt, x, y, z, \wx, \wy, \wz) (\w(\wt, \wx, \wy, \wz) - \w (t, x, y, z)) d\wt \dv(\wx, \wy, \wz)
\\&+ \int_0^t \int_\U G'(t-\wt, x, y, z, \wx, \wy, \wz) \w (t, x, y, z) d\wt \dv(\wx, \wy, \wz) =: J_1 + J_2.
\end{align*}
We employ as before the following projective coordinates
\begin{align*}
\eta^2=\frac{t-\wt}{x^2}, \ S =\frac{(x-\wx)}{\sqrt{t-\wt \ }}, \ 
U= \frac{y-\wy}{\sqrt{t-\wt \ }}, \ Z =\frac{x (z-\wz)}{\sqrt{t-\wt \ }}, \  x, \ 
y, \ z.
\end{align*}
where in these coordinates tf is the face in the limit $|(S, U, Z)|\to \infty$, 
ff and td are defined by $x, \eta$, respectively. 
For the transformation rule of the volume form we compute
\begin{align*}
\beta^*(d\wt \dv(\wx, \wy, \wz))=h \cdot x^{m+2} \eta^{m+1} d\eta \, dS \, dU \, dZ,
\end{align*}
where $h$ is a bounded distribution on $\mathscr{M}^2_h$. Note that in these coordinates
$$
d_M((x, y, z), (\wx, \wy, \wz)) = x \eta \sqrt{|S|^2 + |U|^2 + (2-\eta S) |Z|^2}.
$$
Hence, using \eqref{XH} we arrive for any $\w\in \ho$ after cancellations at the estimates
\begin{equation*}
|J_1| \leq \|\w\|_\A \int x^{2\A} \eta^{-1+\A}  G''(x, y, z, \eta, S, U, Z) \, d\eta \, dS \, dU \, dZ 
\leq C \, x^{2\A} \, \|\w\|_\A, 
\end{equation*}
for some uniform constant $C>0$ and bounded function $G''$ on $\mathscr{M}^2_h$. 
Estimating similarly for $J_2$ leads to a singular $\eta^{-1}$ behaviour at td, due to derivatives
of the form $\eta^{-1} \partial_S, \eta^{-1}\partial_U$ and $\eta^{-1}\partial_Z$. Due to the fact
that $J_2$ is comprised of the heat kernel integrated against $\w(t, x, y, z)$ which does not 
depend on $(S, U, Z)$, we obtain after integrating by parts for some bounded function 
$G''$ on $\mathscr{M}^2_h$ (assume e.g. $X=\eta^{-2}\partial_Z^2$)
\begin{equation*}
|J_2|  \leq \int x^\A \, G'' \, \w(t, x, y, z) \, \partial_z h(x - x \eta S, y- x\eta U, z- \eta Z) \, d\eta \, dS \, dU \, dZ|
\leq C \, x^\A \, \|\w\|_\infty.
\end{equation*}
\end{proof}

We conclude the section with stating the mapping properties for the Laplace Beltrami operator $\Delta$
acting on smooth functions over $M$. We identify $\Delta$ with its Friedrichs self-adjoint extension. 
Under stronger assumptions other than admissibility of the edge metric,
mapping properties of the heat operator have been established in our joint work with 
Bahuaud \cite[Theorem 3.2]{BV}. Here, following the arguments of the previous Theorem 
\ref{main-est1} one easily proves the following result.

\begin{thm}\label{main-est3} Consider an edge manifold $(M,g)$ with an admissible 
edge metric $g$ satisfying tangential stability as in Definition \ref{assump-spectrum}. 
Consider the index set at the right and left face of the heat kernel lifted to $\mathscr{M}^2_h$, 
with the minimal element $\mu_1> 0$. Fix any $\gamma \in (1-\dim F, \mu_1)$. 
Then for $\A \in (0,1)\cap (0,\mu_1-\gamma)$ the heat operator $e^{-t\Delta}$ 
for the Friedrichs self-adjoint extension of the Laplace Beltrami operator $\Delta$ 
defines a bounded mapping 
\begin{align*}
&e^{-t\Delta}: x^{-2+\gamma} \hok (M\times [0,T]) \to \mathcal{C}^{k+2,\A}_{\textup{ie}}(M\times [0,T])_\gamma, \\
&e^{-t\Delta}: x^{-2+\gamma+\varepsilon} \, \mathcal{C}^{k+1,\A}_{\textup{ie}}(M\times [0,T]) \to 
t^{\frac{\varepsilon}{2}} \mathcal{C}^{k+2,\A}_{\textup{ie}}(M\times [0,T])_{\gamma}.
\end{align*}
\end{thm}

The proof proceed along the lines of Theorem \ref{main-est1}.
We point out that due to stochastic completeness of the Laplace Beltrami heat operator,
one can completely avoid terms of the form $L_3$, compare 
\cite{BV} for the estimate of the H\"older differences. This allows us to use 
$\mathcal{C}^{k,\A}_{\textup{ie}}(M\times [0,T])_\gamma$ spaces of scalar functions 
which are defined without requiring better $x$-weight for the supremum norm, in contrast to the
H\"older space of sections of $S_0$. \medskip

Another crucial difference to Theorem \ref{main-est1} is that the higher order asymptotics of solutions in the target space 
$\mathcal{C}^{k+2,\A}_{\textup{ie}}(M\times [0,T])_\gamma$ arises only after differentiation. The reason is the 
$a(t,y) \rho_\rf^0$ leading order term in the asymptotics of the heat kernel $e^{-t\Delta}$ at the right face, 
which is independent of $(x,z)$ 
and hence vanishes under differentiation by $(x\partial_x)$ and $\partial_z$, but not under 
$(x\partial_y)$ and $x^2\partial_t$. This explains the peculiar definition of the H\"older space 
$\mathcal{C}^{k+2,\A}_{\textup{ie}}(M\times [0,T])_\gamma$ for scalar functions, 
which distinguishes the weights depending on the derivatives applied.
Apart from that, the estimates follow along the lines of the 
corresponding argument for the Lichnerowicz Laplacian.

\section{Short time existence of the Ricci de Turck flow}\label{short-section}

We proceed with the explicit analysis of the Ricci flow of an admissible $(\A, \gamma, k)$-H\"older 
regular incomplete edge metric $g$, satisfying tangential stability introduced in Definition \ref{assump-spectrum}. 
A particular consequence of the diffeomorphism invariance of the Ricci tensor is the
well-known fact that the Ricci flow is not a parabolic system. This analytic difficulty
is overcome using the standard de Turck trick with the background metric chosen 
as the initial incomplete edge metric $g \in \textup{Sym}^2({}^{ie}T^*M)$. \medskip

Writing the flow metric as
$(g+v)$ with $v \in \textup{Sym}^2({}^{ie}T^*M)$ and $v(0)=0$, we can follow the linearization
of the Ricci de Turck flow as e.g. in Bahuaud \cite[4.2]{Bah} and obtain a quasilinear parabolic system
for $v \in \textup{Sym}^2({}^{ie}T^*M)$, where all indices refer to the metric and curvature terms
as tensors on ${}^{ie}T^*M$. Let $Ric(g)$ and $R(g)$ denote the Ricci and Riemannian $(4,0)$
curvature tensors, respectively. Then the Ricci de Turck flow can be written as

\begin{equation}
\begin{split}
(\partial_t + \Delta_L) v_{ij} &= (T_1v)_{ij} + (T_2v)_{ij}  +(T_3v)_{ij}, \\
 (T_1v)_{ij} &= ((g+v)^{kl} - g^{kl}) (\nabla_k \nabla_l v)_{ij}, \\
  (T_3v)_{ij} &= (g+v)^{-1} * (g+v)^{-1} * \nabla v * \nabla v, \\
 (T_2v)_{ij} &= -2\textup{Ric}(g)_{ij} + Q((g+v)^{kl} (g+v)_{ip} g^{pq} R(g)_{jkql}) \\ 
 &\quad + Q((g+v)^{kl} (g+v)_{jp} g^{pq} R(g)_{ikql}), 
\end{split}
\end{equation}
where $Q(*)$ is obtained by taking a linear formal expansion of $(*)$ in $v$ and picking those
terms that are at least quadratic in $v$. Moreover, $\Delta_L$ and $\nabla$ denote the Lichnerowicz Laplacian and the Levi Civita
covariant derivative, respectively, both defined with respect to the initial metric $g$ and acting
on $\textup{Sym}^2({}^{ie}T^*M)$. \medskip

We decompose $v= ug \oplus \w$ into pure trace and trace-free parts with respect to the initial metric $g$. 
The Lichnerowicz Laplacian $\Delta_L$ respects the decomposition since 
$\textup{tr}_{g}(\Delta_L v) = \Delta(\textup{tr}_{g} (v))$ and $\Delta (ug) = (\Delta u) g$, 
where $\Delta$ on the right hand side of the latter equation is the Laplace Beltrami 
operator of $g$ acting on functions. \medskip

We also note the following useful expansion as in \cite[(4.1)]{Bah}
\begin{align*}
(g+v)^{ab} &\equiv ((1+u)g+ \w)^{ab} = 
\frac{g^{ab}}{(1+u)} - \frac{g^{al}g^{bm}}{(1+u)^2}\w_{ml}
\\ &+ \frac{((1+u)g+ \w)^{bl}g^{am}g^{pq}}{(1+u)^2}\w_{lp} \w_{mq}.
\end{align*}
Plugging this expansion into $T_1(v)$ we find 
\begin{align*}
T_1(v) &= 
\left( \frac{-u }{(1+u)} \Delta u - \frac{g^{al}g^{bm}}{(1+u)^2}\w_{ml} \partial_a\partial_b u
\right.  \\ &+ \left. \frac{((1+u)g+ \w)^{bl}g^{am}g^{pq}}{(1+u)^2}\w_{lp} \w_{mq}\partial_a\partial_b u\right) g \\
&+ \frac{-u }{(1+u)} \Delta \w - \frac{g^{al}g^{bm}}{(1+u)^2}\w_{ml} \nabla_a\nabla_b \w
\\ &+ \frac{((1+u)g+ \w)^{bl}g^{am}g^{pq}}{(1+u)^2}\w_{lp} \w_{mq}\nabla_a\nabla_b \w
\end{align*}
Let us study the singular structure of $T_1(v)$. 
Note that if the lower index $a$ refers to the radial coordinate $x$ 
or to the edge coordinates $y$, then $\nabla_a$
acts on $S_0=\textup{Sym}^2_0({}^{ie}T^*M)$ as a combination of derivatives $x^{-1}\V$
and $x^{-1}$ times a smooth function on $\overline{M}$, smooth up to 
the boundary. If the lower index $a$ refers to tangential coordinates $z$, then 
$\nabla_a$ acts on $S_0$ as a combination of derivatives $\V$
and smooth functions on $\overline{M}$. On the other hand, any upper index $a$ referring 
to the radial coordinate $x$ or the edge coordinates $y$, contributes no singular $x$ factor due to the structure of the 
inverse metric $g^{-1}$, while an upper index $a$ referring to the tangential coordinates $z$
contributes a factor $x^{-1}$. Counting the factors, we conclude
\begin{align*}
T_1(v) &= 
\left( \frac{-u }{(1+u)} \Delta u + \frac{1}{x^2} O_1(\w) O_1(\V^2 u)  \right)  g \\
&\quad + \frac{-u }{(1+u)} \Delta_L \w + \frac{1}{x^2} O_2(\w, \V \w, \V^2 \w),
\end{align*}
where $O_1(*)$ and $O_2(*)$ refers to any at least linear and at least quadratic combination
of the term $(*)$ in the brackets, respectively. In each of the summands we do not indicate notationally 
further factors which include just bounded combinations of smooth (up to the boundary) functions, $u$ and $\w$, with
at most edge $\VV$ derivatives. Counting singular $x^{-1}$-factors as before we obtain
\begin{align*}
T_2(v) &=  -2\textup{Ric}(g) + \frac{1}{x^2} O_1(\w)O_1(u) + \frac{1}{x^2} O_2(\w), \\
T_3(v) &= \frac{1}{x^2} O_2(\V u) + \frac{1}{x^2} O_1(\w, \V \w) O_1(\V u) + \frac{1}{x^2} O_2(\w, \V \w). 
\end{align*}
where in case of $T_2(v)$ we used the fact that components of the Riemannian curvature 
$(4,0)$ tensor of an edge metric of H\"older regular geometry are $O(x^{-2})$ as $x\to 0$, when acting
on ${}^{ie}TM$. We point out that $T_2(v)$ does not admit terms of the form $x^{-2}O_2(u)$ due to 
cancellations. \medskip

We decompose the Ricci curvature into the trace free component $\frac{\textup{scal}(g)}{m} g$
and the trace-free part of the Ricci curvature tensor $\textup{Ric}'(g)$. Summarizing our 
analysis from above we now obtain under the direct sum decomposition into pure
trace and trace free components (with respect to the initial edge metric $g$) the following
structure of the Ricci de Turck flow

\begin{equation}\label{ricci-de-turck-linearization}
\begin{split}
\left( \partial_t  + \Delta \oplus \Delta_L \right) (u \oplus \w)  &= \left(\left(-\frac{u}{1+u} \Delta u + 
\frac{1}{x^2} O_1(\w) O_1(\V^2 u) + \frac{\textup{scal}(g)}{m}\right)\right. \\
&\oplus \left.\left(-\frac{u }{(1+u)} \Delta_L \w - 2\textup{Ric}'(g)\right)\right) + \frac{1}{x^2} O_2(\w, \V \w, \V^2 \w)
\\ &+ \frac{1}{x^2} O_1(\w, \V \w) O_1(u, \V u) + \frac{1}{x^2}O_2(\V u) =: F(u, \w).
\end{split}
\end{equation}

In order to set up a fixed point argument for that non-linear equation, we follow the outline of 
\cite[Theorem 4.1]{BV} and introduce the following Banach space for any $\gamma_0,\gamma_1>0$ 
and $\alpha \in (0,1)$
\begin{align}
H_{\gamma_0, \gamma_1} := \mathcal{C}^{k+2,\A}_{\textup{ie}}(M\times [0,T])_{\gamma_1} \oplus
\mathcal{C}^{k+2,\A}_{\textup{ie}}(M\times [0,T], S_0)_{\gamma_0}.
\end{align}

We can always choose $\gamma_0, \gamma_1, \alpha > 0$ sufficiently small such that the 
following algebraic relations are satisfied

\begin{equation}\label{gamma-relations}
\begin{split}
&\textup{(i)} \quad \gamma_0 \in (0, \mu_0), \quad \gamma_0 \leq 2\gamma_1, \quad \gamma_0 < \gamma, \\
&\textup{(ii)} \quad \gamma_1 \in (0, \mu_1), \quad \gamma_1 \leq \gamma_0, \quad \ \ \gamma_1 < \gamma, \\
&\textup{if $dim B > 0$, then} \, \gamma_0 \leq 2 \min\{1, \gamma_1\}, \quad \gamma_1 \leq 2, \\
&\textup{(iii)} \quad \A \in (0,(\mu_0 - \gamma_0)) \cap (0, (\mu_1-\gamma_1)).
\end{split}
\end{equation}

\begin{thm}\label{existence1}
Consider an admissible $(\A, \gamma, k+1)$-H\"older regular edge manifold $(M,g)$ with an edge singularity at $B$,
satisfying tangential stability introduced in Definition \ref{assump-spectrum} with minimal elements
$\mu_0, \mu_1>0$ of the index sets at the right and left faces. Then the Riemannian 
metric $g$ may be evolved under the Ricci de Turck flow as\footnote{The decomposition 
$g(t) = (1+u) \oplus \w$ into pure trace and trace-free components is
with respect to $g(0)=g$.} $g(t) = (1+u) \oplus \w$
within the Banach space $H_{\gamma_0, \gamma_1}$ on some finite time interval $[0,T]$,
where $\gamma_0, \gamma_1, \alpha > 0$ are sufficiently small and satisfy \eqref{gamma-relations}.
\end{thm}

\begin{proof}
Consider first the linearization of the Ricci de Turck flow in \eqref{ricci-de-turck-linearization}.
Consider $(u, \w) \in H_{\gamma_0, \gamma_1}$. Then, in view of the Definition \ref{funny-spaces},
the regularity of the individual terms in 
the expression for $F(u, \w)$ is as follows (according to the ordering of terms in the expression 
\eqref{ricci-de-turck-linearization})
\begin{equation}\label{F-regularity}
\begin{split}
F(u, \w) &\in \left(x^{-2+\min\{2, \gamma_1\}} \hok + x^{-2+\gamma_0 + \min\{2, \gamma_1\}} \hok + 
x^{-2+\gamma} \mathcal{C}^{k+1,\A}_{\textup{ie}}\right) \\
& \oplus \left(x^{-2+\gamma_0} \hok + x^{-2+\gamma} \mathcal{C}^{k+1,\A}_{\textup{ie}} \right) 
+ x^{-2+2\gamma_0} \hok \\ &+ x^{-2+\gamma_0} \hok + x^{-2+2 \min\{1, \gamma_1\}} \hok.
\end{split}
\end{equation}
In case of $\dim B = 0$, there are no $x\partial_y$ derivatives and 
we may replace $\min\{2, \gamma_1\}$ and $\min\{1, \gamma_1\}$ by $\gamma_1$
in \eqref{F-regularity}. Using the algebraic relations \eqref{gamma-relations} we conclude that 
\begin{align*}
F \left( H_{\gamma_0, \gamma_1} \right) \subseteq 
\left(x^{-2+\gamma_1}\hok (M\times [0,T])\right) \oplus \hok (M\times [0,T],S_0)_{-2+\gamma_0}.
\end{align*}
Using the mapping properties of Theorems \ref{main-est1} and \ref{main-est3}, we find
\begin{align*}
\Phi := \left(e^{-t\Delta} \oplus e^{-t\Delta_L}\right) \circ F: H_{\gamma_0, \gamma_1} \to H_{\gamma_0, \gamma_1}.
\end{align*}
Solution to the Ricci de Turck flow is by construction a fixed point of $\Phi$. In order 
to prove existence of such a fixed point, we restrict $\Phi$ to a subset of 
$H_{\gamma_0, \gamma_1}$ and define
\begin{align*}
Z_\mu := \{(u, \w) \in H_{\gamma_0, \gamma_1} \mid \|(u, \w)\|_{H_{\gamma_0, \gamma_1}} \leq \mu\}, \quad \mu >0.
\end{align*}
The terms in the linearization \eqref{ricci-de-turck-linearization} are either quadratic in $(u, \w)$
or constant given by the summands $\textup{scal}(g)$ and $\textup{Ric}'(g)$ depending only on the initial metric.
Using the second mapping properties in Theorems \ref{main-est1} and \ref{main-est3}, the $H_{\gamma_0, \gamma_1}$ norm 
of $e^{-t\Delta} \textup{scal}(g) \oplus e^{-t\Delta_L}\textup{Ric}'(g)$ can be made smaller than $\mu/2$ if $T>0$ is sufficiently 
small. Since the other terms in $F(u,\w)$ are quadratic in $(u,\w)$, we find that $\Phi$ maps $Z_\mu$ to itself for $T>0$ and 
$\mu>0$ sufficiently small. Moreover, for $\mu>0$ sufficiently small, $\Phi$ satisfies the contraction mapping property 
\begin{align*}
\|\Phi(u, \w) - \Phi(u', \w')\|_{H_{\gamma_0, \gamma_1}} \leq q \|(u, \w)-(u', \w')\|_{H_{\gamma_0, \gamma_1}}
\end{align*}
with some positive $q<1$ for all $(u, \w)$ and $(u', \w') \in Z_\mu$. 
Hence, repeating the argument of \cite[Theorem 4.1]{BV} 
verbatim, the fixed point exists in $Z_\mu \subset H_{\gamma_0, \gamma_1}$.
\end{proof}

\section{Singular edge structure of the Ricci de Turck flow}\label{edge-flow} 

In this section we explain in what sense the evolved Ricci de Turck metric $g(t)$ remains 
an admissible incomplete edge metric. Recall $g(t) = (1+u)g + \w$, where $g$ is the initial admissible edge metric, 
$u \in \mathcal{C}^{k+2,\A}_{\textup{ie}} (M\times[0,T])_{\gamma_1}$ and $\w \in \mathcal{C}^{k+2,\A}_{\textup{ie}}(M\times[0,T], S_0)_{\gamma_0}$ is 
a higher order trace-free (with respect to $g$) term. Consider first how the conformal transformation
of $g$ into $(1+u)g$ affects the incomplete edge structure of the metric. The argument is worked out in 
\cite{BV2} as well. \medskip

Choose local coordinates $(x,y,z)$ near the 
singularity as before. Due to the fact that an element of $\ho$ must be independent of $z$ at $x=0$, 
we may write $u_0(y):= u(0,y,z)$. Since $u \in \mathcal{C}^{k+2,\A}_{\textup{ie},\gamma_1}$ 
we may apply the mean value theorem and find as in
Corollary \ref{mean-cor} that $x^{-\gamma_1}(u(x,y,z)-u_0(y)) = \xi^{-\gamma_1} (\xi \partial_\xi) u(\xi, y, z)$ with 
$\xi \in (0,x)$, and hence is bounded up to the edge singularity. Consequently 
we obtain a partial asymptotic expansion of $u$ as $x\to 0$
\[ u(x,y,z) = u_0(y) + O(x^{\gamma_1}). \]

Now we substitute $\xt = (1+u_0)^{\frac{1}{2}} x$. For small $u_0$ this defines a new boundary defining function, which 
varies along the edge. Consider the leading order term $\overline{g}$ of $g$, which is given 
by $\overline{g} \!\mid_{\U} = dx^2 + x^2 g_F + \phi^* g_B$ over the singular neighborhood $\U$.
We compute
\begin{equation}
\begin{split}
&(1+u) (dx^2 + \phi^*g^B +  x^2 g^F) \\ =& \ (1+u_0) dx^2 + 
\phi^* ((1+u_0) g^B) + (1+u_0) x^2 g^F + O(x^{\gamma_1})\\ 
=& \ d\xt^2 + \phi^*( (1+u_0) g^B ) + \xt^2 g^F + O(x^{\gamma_1}).
\end{split}
\end{equation}
The key point here is that up to a conformal transformation of the base metric on $B$, 
the leading term of the metric has the same rigid edge structure in the new choice of a boundary
defining function $\wx$. The trace-free term $\w$ is of higher order $O(x^{\gamma_0})$ as $x\to 0$. 
Consequently, up to a change of a boundary defining function and up to higher order terms, $g(t)$ is again an admissible
edge metric in the sense of Definition \ref{admissible}, extended to allow for the metric along the edge 
to be only H\"older regular and not necessarily smooth, and to include higher order terms $h$
with $|h|_g = o(1)$ as $x \to 0$ that are only H\"older regular but not necessarily smooth.

\section{Passing from the Ricci de Turck to the Ricci flow}\label{Ricci-flow}

The solution $g(t)$ of the Ricci de Turck flow is related to the actual Ricci flow by a diffeomorphism, 
a meanwhile classical trick of de Turck which we now make explicit, cf. \cite{Chow}. We employ the 
Einstein notational convention for summation of indices and define the time-dependent
de Turck vector field $W(t)$, given in a choice of local coordinates by the following expression
\begin{align*}
W(t)^j := g^{pq}(t) \left(\Gamma^j_{pq} (g(t)) - \Gamma^j_{pq} (g)\right),
\end{align*} 
where $\Gamma^j_{pq} (g(t))$ and $\Gamma^j_{pq} (g)$ denote the Christoffel symbols of the 
Ricci de Turck flow metric $g(t)$ and the initial admissible edge metric $g$, respectively. The Christoffel
symbols are not coordinate invariant and are given in the fixed choice of local coordinates by 
\begin{align*}
\Gamma^j_{pq} (g) = \frac{1}{2} g^{jm} \left( \partial_p g_{mq} + \partial_q g_{mp} - \partial_m g_{pq}\right),
\end{align*}
with $\Gamma^j_{pq} (g)$ obviously defined by the same expression with $g$ replaced by $g(t)$. From the expressions above
it is clear that the de Turck vector field $W(t)$ is a linear combination of vector fields $x^{-1}\V$ with 
$x^{-1+\overline{\gamma}} \mathcal{C}^{k+1,\A}_{\textup{ie}}$ regular coefficients; 
where 
\begin{equation}\label{gamma-overline}
\begin{split}
&\overline{\gamma}= \min\{\gamma_0, \gamma_1\}, \quad \textup{if} \ \dim B = 0, \\
&\overline{\gamma}=\min\{\gamma_0, \gamma_1, 1\}, \quad \textup{if} \ \dim B > 0,
\end{split}
\end{equation} 
due to possible $\partial_y$ derivatives.
\medskip

The de Turck vector field defines the corresponding one-parameter family of diffeomorphisms $\phi(t):M\to M$,
with $x^{-1+\overline{\gamma}} \mathcal{C}^{k+1,\A}_{\textup{ie}}$ regular 
components with respect to the local coordinates $(x,y,z)$ near the edge.
However, a priori we do not have a uniform existence time for $\phi(t)$ the closer we get to the singularity.
This is due to the fact that the $\partial_x$ component of the de Turck vector field need not be inward pointing
at $x=0$, unless we require that $\overline{\gamma} > 1$. In view of \eqref{gamma-overline}, 
$\overline{\gamma} > 1$ can only be satisfied in case of conical singularities $\dim B = 0$. \medskip

Assuming for the moment that $\phi(t)$ exists for a short time uniformly up to the edge singularity, 
we obtain a solution $g'(t)$ to the Ricci flow by setting
$g'(t):= \phi(t)^*g(t) = g(d\phi [\cdot], d\phi [\cdot])$. Due to additional derivatives, we conclude
\begin{align}
g' \in x^{-2+\overline{\gamma}} \mathcal{C}^{k,\A}_{\textup{ie}}
(M\times [0,T], \textup{Sym}^2({}^{ie}T^*M)).
\end{align}
This proves the following short time existence statement.
  
\begin{thm}
Consider an admissible $(\A, \gamma, k+1)$-H\"older regular edge manifold $(M,g)$,
satisfying tangential stability with minimal elements $(\mu_0, \mu_1)$. 
Assume that the de Turck vector field is inward pointing at $x=0$. 
This is true e.g. if $\dim B = 0$ and the minimal elements $\mu_0, \mu_1 > 1$, so that 
we may choose $\gamma_0, \gamma_1>1$ subject to the algebraic
relations \eqref{gamma-relations} and consequently $\overline{\gamma} 
= \min\{\gamma_0, \gamma_1\} > 1$. \medskip

Then the Riemannian metric $g$ 
may be evolved under the Ricci flow with 
\begin{equation}
g'(t) \in x^{-2+\overline{\gamma}} \mathcal{C}^{k,\A}_{\textup{ie}}
(M\times [0,T], S)
\end{equation} 
on some finite time interval $t\in [0,T]$. If $\mu_0, \mu_1 > 2$ so that we may choose 
$\gamma_0, \gamma_1 \geq 2$, then $g' \in \hok$
acts boundedly on $x^{-1} \V$ vector fields and is in that sense an edge metric.
\end{thm}

\section{Evolution of the Riemannian curvature tensor along the flow}\label{Ricci-bounded-section}

In this section we prove that the Riemannian curvature tensor of the Ricci flow metric $g'(t)$
is bounded along the flow for $t\in (0,T]$ when starting at an admissible H\"older regular edge manifold $(M,g)$
with bounded Riemannian curvature. More precisely we prove the following theorem.

\begin{thm}
Consider an admissible $(\A, \gamma, k+1)$-H\"older regular edge manifold $(M,g)$
satisfying tangential stability. Consider the Ricci de Turck flow
solution $g(t)=(1+u)g + \w$, where $\w$ trace-free with respect to $g$ and
\begin{align*}
(u, \w) \in H_{\gamma_0, \gamma_1} = \mathcal{C}^{k+2,\A}_{\textup{ie}}(M\times [0,T])_{\gamma_1} \oplus
\mathcal{C}^{k+2,\A}_{\textup{ie}}(M\times [0,T], S_0)_{\gamma_0},
\end{align*}
subject to the algebraic relations \eqref{gamma-relations}, where in particular
$\gamma \geq \max\{\gamma_0, \gamma_1\}$. Then $g(t)$ is 
$(\A, \overline{\gamma}, k)$-H\"older regular for each fixed $t\in [0,T]$
with $\overline{\gamma}= \min\{\gamma_0, \gamma_1\} \leq \gamma$.
\end{thm}

\begin{proof}
We need to check regularity of the various curvatures in the sense of 
Definition \ref{regular-geometry}. We will only write out the argument for the Riemannian curvature tensor.
The argument for the Ricci curvature tensor is similar.
Recall the following transformation rule for the Riemannian curvature tensor under 
conformal transformations
\begin{align}\label{conformal}
R(e^{2\phi}g) = e^{2\phi} \left( R(g) - \left[g \wedge \left(\nabla \partial \phi - \partial \phi \cdot \partial \phi 
+ \frac{1}{2} \|\nabla \phi\|^2 g\right)
\right]\right), 
\end{align}
where $\wedge$ refers here to the Kulkarni-Nomizu product.
Setting $e^{2\phi}:=(1+u)$, we conclude from  
$u \in \mathcal{C}^{k+2,\A}_{\textup{ie}}(M\times [0,T])_{\gamma_1} $ that the 
components of $R((1+u)g) - (1+u) R(g)$ acting on $x^{-1}\V$ vector fields are in $x^{-2+\gamma_1}\hok$. Now consider the full solution  
$g(t)=(1+u)g + \w$ with the higher order term $\w \in \mathcal{C}^{k+2,\A}_{\textup{ie}}(M\times [0,T], S_0)_{\gamma_0}$. 
Then, $R((1+u)g + \w) - R((1+u)g)$ is an intricate combination of $u$ and $\w$, involving their second order $x^{-2}\V^2$
derivatives and hence its components are in $x^{-2+\min\{\gamma_0, \gamma_1\}}\hok$.
\end{proof}

\section{Small perturbation of flat edge metrics}\label{small}

Let $(M,h)$ be an admissible incomplete edge manifold. Assume that $h$ is flat\footnote{Note 
that a flat edge metric $h$ is automatically H\"older regular with any $(\alpha, k, \gamma)$.}, 
which is equivalent to Ricci flatness in dimension three and is true in case of flat orbifolds. Long time existence
and stability of Ricci flow for small perturbations of Ricci flat metrics that are 
\emph{not} flat, requires an integrability condition and other intricate geometric arguments.
This has been the focus of the joint work with Kr\"oncke \cite{KrVe}. \medskip

In the flat setting we redefine the H\"older spaces in Definition \ref{funny-spaces}
by replacing all edge derivatives $\V$ by $\nabla_{\V}$, where $\nabla$ is the 
covariant derivative on $S$ induced by the Levi Civita connection. We also relax the
condition of tangential stability.

\begin{defn}\label{weak-stability-definition} We say that an admissible edge manifold $(M,h)$
is \emph{weakly} tangentially stable with bound $u$ if 
\begin{equation}\label{stability-flat}
\begin{split}
&\textup{Spec} \, \square_L \geq 0, \quad \textup{Spec} \, \square'_L \geq 0, \\
&u:= \min \left\{\, \textup{Spec} \, \square_L \backslash \{0\}, \textup{Spec} \, \square'_L \backslash \{0\} \, \right\}. 
\end{split}
\end{equation}
\end{defn}

In a joint follow-up work with Kr\"oncke \cite[Theorem 1.7]{KrVe} weak 
tangential stability has been explicitly characterized in terms of the 
spectral data on the cross section as follows. 

\begin{thm}
	Let $(F,g_F)$ be a compact Einstein manifold of dimension $f\geq 3$ 
	with the Einstein constant $(f-1)$. We write $\Delta_E$ for its Einstein operator, and denote the Laplace Beltrami 
	operator by $\Delta$. Then weak tangential stability holds if and only if $\mathrm{Spec}(\Delta_E|_{TT})\geq 0$ and 
	$\mathrm{Spec}(\Delta)\setminus \left\{0\right\}\cap (f, 2(f+1))=\varnothing$. 
\end{thm}

The basic examples of spaces that are weakly tangentially stable but not 
tangentially stable are spaces with cross sections $\mathbb{S}^f$ and $\R \mathbb{P}^f$, 
or quotients of these. We refer to our work \cite{KrVe} for further details. 
\medskip

Under the assumption of weak tangential stability with bound $u$ we define
\begin{equation}\label{minimal-element-flat}
\mu := \sqrt{u + \left(\frac{f-1}{2}\right)^2} - \left(\frac{f-1}{2}\right).
\end{equation}
Note that here we do not treat the pure-trace and the trace-free components 
$S=S_0 \oplus S_1$ separately with different weights.
We also set for any $\gamma > 0$ and a fixed integer $k \in \N_0$
\begin{align*}
H_{\gamma} = \mathcal{C}^{k+2,\A}_{\textup{ie}}(M\times [0,\infty), S)^b_{\gamma}.
\end{align*}

\begin{thm}
Let $(M,h)$ be an admissible flat incomplete edge manifold, which is 
weakly tangentially stable with bound $u$. Consider any $\gamma \in (0,\mu)$,
where $\mu$ is defined by \eqref{minimal-element-flat}. Then for any 
$\alpha \in (0,\mu - \gamma) \cap (0,1)$ the fundamental solution $e^{-t\Delta_L}$ 
admits the following mapping property
\begin{equation}\label{mapping2}
\begin{split}
&e^{-t\Delta_L}: x^{-2+\gamma}\hok(M\times [0,\infty), S)^b  \to 
\mathcal{C}^{k+2,\A}_{\textup{ie}}(M\times [0,\infty), S)^b_{\gamma} = H_\gamma, \\
&e^{-t\Delta_L}: \mathcal{C}^{k+2,\A}_{\textup{ie}}(M, S)^b_{\gamma} \subset H_\gamma \to 
\mathcal{C}^{k+2,\A}_{\textup{ie}}(M\times [0,\infty), S)^b_{\gamma} = H_\gamma,
\end{split}
\end{equation}
where the first operator involves convolution in time, while the second operator
acts without convolution in time.
\end{thm}

\begin{proof}
Since $(M,h)$ is flat, $\square_L$ is the rough Laplacian on $(F,g_F)$ and 
$\ker \square_L$ consists of elements that are parallel along 
$F$ and hence vanish under application of $\nabla_{\partial_z}$. This corresponds
precisely to the scalar case, where $\Delta_L$ reduces to the Laplace Beltrami operator
and $\square_L$ is the Laplace Beltrami operator of $(F,g_F)$. In that case, 
$\ker \square_L$ also consists of constant functions that vanish under the application of $\partial_z$.
Hence the first statement can be obtained
along the lines of the estimates for the scalar Laplace Beltrami operator in 
Theorem \ref{main-est3}. \medskip

For the second statement, note that without convolution in time, a missing $dt$ integration leads to 
two orders less at ff and td in the estimates of Theorems \ref{main-est1} and \ref{main-est3}.
This is however offset by the fact that the heat operator acts on $\mathcal{C}^{k+2,\A}_{\textup{ie}}(M, S)^b_{\gamma}$ instead of the more singular space $x^{-2+\gamma}\hok(M\times [0,\infty), S)^b$.
Thus we may deduce the second statement again as in Theorem \ref{main-est3}. 
\end{proof}

\begin{prop}\label{kernel-discreteness}
Assume that $\Delta_L$ acting 
on $C^\infty_0(M,S)$ is non-negative and denote its Friedrichs 
self-adjoint extension by $\Delta_L$ again. Then $\Delta_L$ is discrete, 
non-negative and 
\begin{equation}
\forall \, k \in \N_0: \ \ker \Delta_L \subset \mathcal{C}^{k+2,\A}_{\textup{ie}}(M, S)^b_{\gamma} \subset H_\gamma.
\end{equation}
\end{prop}

\begin{proof}
By Theorem \ref{essential}, the heat operator $e^{-t\Delta_L}$ coincides with the fundamental
solution constructed in Theorem \ref{heat-asymptotics}. One can easily check from the microlocal
description that the Schwartz kernel of $e^{-t\Delta_L}$ is square-integrable on $M\times M$ for 
fixed $t>0$. Hence $e^{-t\Delta_L}$ is Hilbert Schmidt and due to the semi-group property 
in fact trace-class. Consequently, the Friedrichs extension $\Delta_L$ admits discrete spectrum.
Its non-negativity follows from non-negativity of $\Delta_L$ on $C^\infty_0(M,S)$. \medskip

For fixed $t>0$ we may employ the heat kernel asymptotics to conclude that 
$e^{-t\Delta_L}$ maps $L^2(M,S)$ to $\ho(M,S)$. Since $e^{-t\Delta_L} \restriction 
\ker \Delta_L \equiv \textup{Id}$\footnote{Indeed, $\Delta_L$ is discrete and hence 
the heat operator acts as identity on the kernel of $\Delta_L$.} , we conclude that $\ker \Delta_L \subset \ho(M,S)$
and iteratively, using \eqref{mapping2} and $e^{-t\Delta_L} \restriction 
\ker \Delta_L \equiv \textup{Id}$ find that 
\begin{equation}
\ker \Delta_L \subset \mathcal{C}^{k+2,\A}_{\textup{ie}}(M, S)^b_{\gamma} \subset H_\gamma.
\end{equation}
\end{proof}

\begin{thm}
Let $(M,h)$ be an admissible flat incomplete edge manifold, which is 
weakly tangentially stable with bound $u$. Consider any $\gamma \in (0,\mu)$,
where $\mu$ is defined by \eqref{minimal-element-flat}, and
$\alpha \in (0,\mu - \gamma) \cap (0,1)$. Assume that $\Delta_L$ acting 
on $C^\infty_0(M,S)$ is non-negative and denote its Friedrichs 
extension by $\Delta_L$ again. Consider the orthogonal decomposition
\begin{equation}\label{decomposition-kernel}
\begin{split}
L^2(M,S) &= \ker \Delta_L  \oplus \left( \ker \Delta_L \right)^\perp, \\ 
v &= v_{=} \oplus v_\perp.
\end{split}
\end{equation}
Then for $\lambda_0>0$ being the first non-zero eigenvalue of $\Delta_L$ 
there exists $C>0$ such that 
\begin{equation}\label{mapping-exponential-estimate}
\forall \, v \in \mathcal{C}^{k+2,\A}_{\textup{ie}}(M, S)^b_{\gamma}:
\| e^{-t\Delta_L} v_\perp \|_{k+\alpha, \gamma} \leq C e^{-t\lambda_0} \| v_\perp \|_{k+\alpha, \gamma}.
\end{equation}
\end{thm}

\begin{proof}
The proof is an adaptation of the corresponding argument in the follow-up work 
jointly with Kr\"oncke \cite{KrVe}.
For any $v \in \mathcal{C}^{k+2,\A}_{\textup{ie}}(M, S)^b_{\gamma} \subset L^2(M,S)$, 
we conclude by Proposition \ref{kernel-discreteness}
\begin{equation}
v_=, v_\perp \in \mathcal{C}^{k+2,\A}_{\textup{ie}}(M, S)^b_{\gamma} \subset H_\gamma.
\end{equation}
Hence $e^{-t\Delta_L} v^\perp \equiv e^{-t\Delta^\perp_L} v^\perp \in H_\gamma$ by the mapping properties \eqref{mapping2},
and it makes sense to estimate its norm. 
Denote the set of eigenvalues and eigentensors of the Friedrichs extension $\Delta_L$
by $\{\lambda, v_\lambda\}$. Assume the eigenvalues $\{\lambda\}$ are ordered in the ascending
order and $\lambda_0$ denotes the first non-zero eigenvalue. By discreteness of the spectrum, the heat kernel can be written in terms of 
eigenvalues and eigentensors for any $(p,q) \in M \times M$ by 
\begin{equation}\label{heat-perp}
\begin{split}
&e^{-t\Delta_L}(p,q) = \sum_{\lambda \geq 0} e^{-t\lambda} v_\lambda(p) \otimes v_\lambda(q), \\
&e^{-t\Delta^\perp_L}(p,q) = \sum_{\lambda \geq \lambda_0} e^{-t\lambda} v_\lambda(p) \otimes 
v_\lambda(q).
\end{split}
\end{equation}
Consider any $D\in \{\textup{Id}, \nabla_{\V}\}$. The notation $(D_1 \circ D_2) e^{-t\Delta_L}$
indicates that the operator $D$ is applied once in the first spacial variable of $e^{-t\Delta_L}$ 
and once in the second spacial variable. By Theorem \ref{heat-asymptotics}, the lifted kernel 
$\beta^*(D_1 \circ D_2)  e^{-t\Delta_L}$ is bounded at the left and right face of the heat space 
$\mathscr{M}^2_h$. Consequently, for a fixed $t_0>0$, the pointwise trace 
$\textup{tr}_p (D_1 \circ D_2)  e^{-t_0\Delta_L}(p,p)$ is bounded uniformly in $p\in M$. 
By Proposition \ref{kernel-discreteness}, same holds for $e^{-t_0\Delta^\perp_L}$
and hence there exists $C'(t_0)>0$ such that (we denote the pointwise norm on fibres of $S$ by $\| \cdot \|$)
\begin{equation}
\begin{split}
C'(t_0) &\geq \textup{tr}_p (D_1 \circ D_2) e^{-t_0\Delta^\perp_L}(p,p) = 
\sum_{\lambda \geq \lambda_0} e^{-t\lambda} \| D v_\lambda(p)\|^2 \\
&= e^{-t_0 \lambda_0}\sum_{\lambda \geq \lambda_0} 
e^{-t(\lambda -\lambda_0)} \| D v_\lambda(p)\|^2 
=: e^{-t_0 \lambda_0} \cdot K(t_0,p).
\end{split}
\end{equation}
Note that each $(\lambda-\lambda_0)$ in the sum above is non-negative. Hence
each $e^{-t(\lambda - \lambda_0)}$ as well as $K(t,p)$ are monotonically decreasing as $t \to \infty$ by construction.
Consequently, for any $t\geq t_0$ and any $p\in M$, we conclude
\begin{equation}
K(t,p) \leq C'(t_0) e^{t_0 \lambda_1} =: C(t_0).
\end{equation}
Hence we can estimate for any $t\geq t_0$ and $p\in M$
\begin{equation}
\begin{split}
\textup{tr}_p (D_1 \circ D_2) e^{-t\Delta^\perp_L}(p,p) = e^{-t \lambda_0} \cdot K(t,p)
\leq C(t_0) e^{-t\lambda_0}.
\end{split}
\end{equation}
We conclude with the following intermediate estimate
\begin{equation}\label{pointwise-exp-estimate}
\begin{split}
\| D e^{-t\Delta^\perp_L}(p,q)\| &= 
\sum_{\lambda \geq \lambda_0} e^{-t\lambda} \| D v_\lambda(p)\|  \cdot \|v_\lambda(q)\|
\\ & \leq \sum_{\lambda \geq \lambda_0} \frac{e^{-t\lambda}}{2} \| D v_\lambda(p)\|^2 
+ \sum_{\lambda \geq \lambda_0}  \frac{e^{-t\lambda}}{2} \|v_\lambda(q)\|^2 
\leq C(t_0) e^{-t\lambda_0}.
\end{split}
\end{equation}
From there the statement follows for $t\geq t_0$ for some fixed $t_0 >0$.
By \eqref{mapping2}, the norm of $e^{-t\Delta_L} v_\perp$ is bounded up to a 
constant by the norm of $v_\perp$ uniformly for $t \in [0,t_0]$. Hence the 
statement follows for all $t>0$ after a change of constants. 
\end{proof}

\begin{defn}
Let $\varepsilon > 0$. An incomplete edge metric $g$ on $M$ is said to be 
an $\varepsilon$-close higher order perturbation of $h$ in $H_{\gamma}$, if 
$(g-h) \in H_\gamma$ with the H\"older norm smaller than or equal to $\varepsilon$.
\end{defn}

Note that such a higher order perturbation $g$ of an admissible edge metric $h$ 
is automatically admissible as well, by the argument in \S \ref{edge-flow}. \medskip

We study Ricci flow of $g$, and in slight difference to \S \ref{short-section}
apply the Ricci de Turck trick with $h$ as the background metric. This leads to 
the linearized parabolic equation as in \eqref{ricci-de-turck-linearization} with $\textup{scal}(h)$
and $\textup{Ric}'(h)$ being trivially zero for the Ricci flat metric $h$, and $T_3(v)=0$ since $h$ 
is actually assumed to be flat. Writing $v= u \oplus \w$, $\Delta_L$ and $\nabla$ for the Lichnerowicz Laplacian
and the Levi Civita covariant derivative on $S$, defined with respect to $h$, we obtain 
\begin{equation}\label{ricci-de-turck-linearization2}
\begin{split}
\left( \partial_t  + \Delta_L \right) v = -\frac{u}{1+u} 
\Delta_L v + x^{-2} O_1(\nabla_{\V} v)
O_1(v, \nabla_{\V} v)  =: F(v).
\end{split}
\end{equation}
We seek to find a solution $g(t)=(1+u)h\oplus \w$ to that equation with initial condition $g(0)=g$.
Here, as before $(1+u)h\oplus \w$ denotes the decomposition into pure trace and 
trace-free components with respect to $h$. We prove the following theorem. 

\begin{thm}\label{small-thm}
Consider an admissible flat edge manifold $(M,h)$ with an edge singularity at $B$,
satisfying weak tangential stability with bound $u$. Assume that $\Delta_L$ acting 
on $C^\infty_0(M,S)$ is non-negative and denote its Friedrichs self-adjoint extension 
by $\Delta_L$ again. Then there exists $\varepsilon > 0$ sufficiently 
small such that if $g$ is an $\varepsilon$-close higher 
order perturbation of $h$, with 
\begin{equation}
(g-h) \perp \ker \Delta_L \subset L^2(M,S),
\end{equation}
the Riemannian metric $g$ may be evolved under the 
Ricci de Turck flow as $g(t) = (1+u) \oplus \w$
within the Banach space $H_\gamma$ for all times,
provided the following algebraic relations are satisfied
\begin{equation}\label{gamma-relations-2}
\begin{split}
&\gamma \in (0, \mu), \ \textup{and if $\dim B > 0$ then} \, 
\gamma \leq 2,
\end{split}
\end{equation}
and $\A \in (0,(\mu - \gamma))$.
Moreover there exists $\mu(\varepsilon)>0$
sufficiently small, with $\mu(\varepsilon)\to 0$ as $\varepsilon$ goes to zero, such that 
the H\"older norm of $(g(t)-h)$ in $H_{\gamma}$ is smaller or equal to $\mu(\varepsilon)$,
uniformly in time $t\in [0,\infty)$.
\end{thm}

\begin{proof}
The Ricci de Turck flow $g(t)$ with $h$ as background metric and 
$g(0)=g$ as initial condition exists is a fixed point of the following map
\begin{align*}
\Psi : v := g(t)-h \in H_{\gamma} \mapsto e^{-t\Delta_L} * F(v) + 
e^{-t\Delta_L} (g-h) \in H_{\gamma},
\end{align*}
where $\Delta_L$ is the Friedrichs self adjoint extension of the Lichnerowicz Laplacian on $S$, 
$e^{-t\Delta_L}$ is the corresponding heat operator, $*$ refers to the action of the heat operator 
with convolution in time, and in $e^{-t\Delta_L}(g-h)$ the heat operator is
applied without convolution in time. The fact that $\Psi$ maps $H_\gamma$
to itself follows from 
\begin{equation}
(g-h) \in H_\gamma, \quad F(v)\in x^{-2+\gamma}\hok(M\times [0,\infty), S)^b,
\end{equation} 
for $v \in H_\gamma$, and the mapping properties \eqref{mapping2}.\medskip

Consider the orthogonal decomposition \eqref{decomposition-kernel}.
We fix any $\beta \in (0,\lambda_0)$. In order to prove existence of a 
fixed point for $\Psi$, we consider any 
$\delta > 0$ and restrict $\Psi$ to a subset
of the Banach space $H_\gamma$
\begin{align*}
Z_\delta := \{v \in H_{\gamma} \mid \|v_\perp(t) \|_{k+\alpha, \gamma} \leq \delta
e^{-t(\lambda_0-\beta)},  \ \|v_=(t) \|_{k+\alpha, \gamma} \leq \delta\}.
\end{align*}
Note that on flat manifolds $\Delta_L = \nabla^* \nabla$ and hence
\begin{equation}
\ker \Delta_L = \ker \nabla \subset L^2(M,S).
\end{equation}
Note that $x^{-2+\gamma}\hok(M\times [0,\infty), S)^b \subset L^2(M,S)$, since we always assume 
$\dim F \geq 1$. Since in $F(v)$ all terms are quadratic in $v$ and admit at least one component
of the form $\nabla v = \nabla v_{\perp}$, we conclude for any $v \in Z_\delta$ and some uniform constants $C,C'>0$
\begin{equation}\label{F-estimate}
\| F(v(t)) \|_{L^2} \leq C \| F(v(t)) \|_{k+\alpha, -2+\gamma} \leq CC' \delta^2 e^{-t(\lambda_0-\beta)}.
\end{equation} 
Consider the discrete set $\{\lambda, v_\lambda\}$ 
of eigenvalues and eigentensors of the Friedrichs extension $\Delta_L$.
As in \eqref{heat-perp}, we may now decompose the heat kernel for any $(p,q) \in M \times M$ as follows
\begin{equation*}
\begin{split}
e^{-t\Delta_L}(p,q) &= \sum_{\lambda = 0} v_\lambda(p) \otimes v_\lambda(q) + 
\sum_{\lambda \geq \lambda_0} e^{-t\lambda} v_\lambda(p) \otimes 
v_\lambda(q) \\ &=: \Pi(p,q) + e^{-t\Delta^\perp_L}(p,q).
\end{split}
\end{equation*}
Clearly, $\Pi$ is the orthogonal projection of $L^2(M,S)$ onto $\ker \Delta_L$, while
$e^{-t\Delta^\perp_L}$ is the composition of the heat operator with the orthogonal
projection onto $(\ker \Delta_L)^\perp$. Hence we find for any $v \in H_\gamma$
(recall, $F(v) \in L^2(M,S)$)
\begin{equation*}
\begin{split}
\left(e^{-t\Delta_L}* F(v)\right)_= = \Pi * F(v),  \quad
\left(e^{-t\Delta_L}* F(v)\right)_\perp = e^{-t\Delta^\perp_L} * F(v).
\end{split}
\end{equation*}
In view of \eqref{F-estimate} we may estimate the action of $\Pi$ for any $v \in Z_\delta$
as follows
\begin{equation*}
\begin{split}
\| \left(e^{-t\Delta_L}* F(v)\right)_=\|_{k+\alpha, \gamma} = 
&\leq \sum_{\lambda = 0} \| v_\lambda \|_{k+\alpha, \gamma}  \int_0^t |(v_\lambda, F(v(t')))_{L^2}| dt'
\\ &\leq \sum_{\lambda = 0} \| v_\lambda \|_{k+\alpha, \gamma} \| v_\lambda \|_{L^2} 
\int_0^t \| F(v(t')) \|_{L^2} dt' \\ 
&\leq C \int_0^t \delta^2 e^{-t'(\lambda_0-\beta)} dt' \leq C' \delta^2,
\end{split}
\end{equation*}
where $C,C'>0$ are some uniform constants. In order to obtain a similar estimate for the action 
of $e^{-t\Delta^\perp_L}$, note that by the pointwise estimate \eqref{pointwise-exp-estimate},
the Schwartz kernel of $e^{-t\Delta^\perp_L}$ can be written as $e^{-t\lambda_0}$
times a kernel $G$ of same asymptotics in the heat space $\mathscr{M}^2_h$, 
which is uniform as $t\to \infty$. Hence we may write $\left(e^{-t\Delta_L}* F(v)\right)_\perp$ 
as follows
\begin{equation*}
\begin{split}
\int_0^t \int_M e^{-(t-t') \lambda_0} e^{-t'(\lambda_0-\beta)} 
G(t-t',p,q) \left(e^{t'(\lambda_0-\beta)} F(v)(t',q)\right) dt' d\textup{vol}_h(q).
\end{split}
\end{equation*}
We now estimate for any $v \in Z_\delta$ the $H_\gamma$-norm and find
using \eqref{F-estimate}
\begin{equation*}
\begin{split}
\| \left(e^{-t\Delta_L}* F(v)\right)_\perp\|_{k+\alpha, \gamma}
&\leq e^{-t(\lambda_0-\beta)} \| \int_0^t \int_M 
G(t-t',p,q) \times \\ & \times \left(e^{t'(\lambda_0-\beta)} F(v)(t',q)\right) dt' d\textup{vol}_h(q)\|_{k+\alpha, \gamma} 
\\ &\leq C \delta^2 e^{-t(\lambda_0-\beta)}
\end{split}
\end{equation*}
for some uniform constant $C>0$. Note also that by assumption, $(g-h) \perp \ker \Delta_L$
with $H_\gamma$-norm bounded by $\varepsilon$. Hence, by Theorem 
\ref{mapping-exponential-estimate} 
\begin{equation*}
\begin{split}
\| \left(e^{-t\Delta_L} (g-h) \right)_\perp\|_{k+\alpha, \gamma}
= \| e^{-t\Delta^\perp_L} (g-h) \|_{k+\alpha, \gamma}
\leq C \varepsilon e^{-t\lambda_0}.
\end{split}
\end{equation*}
Summarizing we have shown that there exists a uniform constant $C>0$
such that for any $v \in Z_\delta$
\begin{equation}
\| \left(\Psi v \right)_\perp\|_{k+\alpha, \gamma} \leq C \left(\delta^2 + \varepsilon \right)
e^{-t(\lambda_0-\beta)}, \quad \| \left(\Psi v \right)_= \|_{k+\alpha, \gamma} \leq C \delta^2.
\end{equation} 
Taking $(\varepsilon, \delta)$ sufficiently small (proportionally to each other) ensures
that $\Psi$ maps $Z_\delta$ to itself. Moreover, since $F(v)$ is quadratic in $v$, we find
that $\Psi$ is a contraction
\begin{align*}
\|\Psi(v) - \Psi(v')\|_{H_{\gamma}} \leq q \|v-v'\|_{H_{\gamma}}
\end{align*}
with some positive $q<1$, for all $v,v' \in Z_\delta$.
Hence a fixed point exists in $Z_\delta \subset H_{\gamma}$. Note that $\delta>0$ can be taken 
smaller the smaller we choose $\varepsilon>0$.\medskip

Note that in contrast to Theorem \ref{existence1}, we do not need to 
restrict to a finite time interval $[0,T]$ with $T>0$ sufficiently small
and set up the fixed point argument in the H\"older space $H_{\gamma}$ for all times.
This is due to the fact that all terms in the linearization of the Ricci de Turck 
flow \eqref{ricci-de-turck-linearization2} are at least quadratic and hence 
$\Psi$ maps $Z_\delta$ to itself for $\delta>0$ sufficiently small without additional
restrictions on time. 
\end{proof}

Note that as explained in \S \ref{edge-flow}, the Ricci de Turck flow $g(t)$
is an admissible edge metric with the same leading term as $h$ up to a 
conformal transformation of the metric along the
edge singularity and a change of the boundary defining function $x$. 

\section{Appendix: Mean value theorem on edge manifolds}\label{mean-section}

The subsequent section on mapping properties of the heat kernel
for the Lichnerowicz Laplacian requires an estimate of the corresponding
H\"older differences. This will be somewhat different from similar estimates 
performed in \cite{BV}, since the Lichnerowicz Laplacian on symmetric $2$-tensors
does not satisfy stochastic completeness. Therefore we will use some different 
argument, which is developed in the present section. We begin with the following 
consequence of the mean value theorem for Banach-valued functions of a single variable.

\begin{lemma}\label{mean-lemma}
Consider any Banach space $B$ with norm $\|\cdot \|$ and any $\eta, \eta' \in \R$
contained in a compact convex subset $K\subset \R$. Assume $\eta \leq \eta'$. Consider some continuously 
differentiable function $\w: K \to B$. Then for  and any fixed odd integer $N\in \N$, 
there exists a uniform constant $C>0$ and some $\delta \in [\eta, \eta']$ such that 
\begin{align}
\| \w (\eta) - \w (\eta') \| \leq C |\eta-\eta'|^{\frac{1}{N}} 
 \left\| \delta^{\frac{N-1}{N}} \left( \frac{d}{d\eta} \w \right) (\delta) \right\|
\end{align}
\end{lemma}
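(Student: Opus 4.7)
The plan is to reduce the estimate to the standard mean value inequality for Banach-valued $C^1$ functions by a change of variables that converts the singular weight $\delta^{(N-1)/N}$ into a smooth Jacobian factor. Specifically, I will substitute $s = \tau^N$, which for odd $N$ is a smooth bijection $\mathbb{R}\to\mathbb{R}$, so that $\tilde{\w}(\tau) := \w(\tau^N)$ is continuously differentiable with $\tilde{\w}'(\tau) = N\tau^{N-1}\w'(\tau^N)$ by the chain rule. The factor $\tau^{N-1}$ is precisely what will produce the weight $\delta^{(N-1)/N}$ once we set $\delta := \tau^N$.

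Writing $\tau_0 := \eta^{1/N}$ and $\tau_1 := \eta'^{1/N}$ for the real $N$-th roots, I would then invoke the Banach-valued mean value inequality
\[
\|\tilde{\w}(\tau_1) - \tilde{\w}(\tau_0)\| \leq (\tau_1 - \tau_0)\sup_{\tau\in[\tau_0,\tau_1]} \|\tilde{\w}'(\tau)\|.
\]
Since $\|\tilde{\w}'(\cdot)\|$ is continuous on the compact interval $[\tau_0,\tau_1]$, its supremum is attained at some $\tau^*$. Setting $\delta := (\tau^*)^N \in [\eta,\eta']$, I recover $(\tau^*)^{N-1} = \delta^{(N-1)/N}$ (well-defined and non-negative, because $N-1$ is even for odd $N$), and the inequality above becomes
\[
\|\w(\eta')-\w(\eta)\| \leq N(\tau_1-\tau_0)\,\delta^{(N-1)/N}\,\|\w'(\delta)\|.
\]

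To close the argument I need to bound $\tau_1-\tau_0 = \eta'^{1/N}-\eta^{1/N}$ by a constant multiple of $(\eta'-\eta)^{1/N}$. For $0\le \eta \le \eta'$ this is immediate from subadditivity of the concave function $t\mapsto t^{1/N}$ on $[0,\infty)$ (which sends $0$ to $0$), so that $\eta'^{1/N}-\eta^{1/N}\le(\eta'-\eta)^{1/N}$; by symmetry, using that $N$ is odd, the same bound holds when both endpoints lie in $(-\infty,0]$. In the mixed-sign case $\eta<0<\eta'$ the elementary inequality $a^{1/N}+b^{1/N}\le 2^{1-1/N}(a+b)^{1/N}$ for $a,b\ge 0$ absorbs a bounded factor depending only on $N$ into $C$, and one can also simply split the interval at $0$. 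In all cases this gives the claim with $C$ depending only on $N$.

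The argument is essentially mechanical once one spots the substitution $s=\tau^N$. The only real subtlety is the attainment of the supremum at a single point, so that the estimate records a pointwise bound for one specific $\delta$ rather than merely a supremum bound; this uses only continuity of $\w'$ on the compact set $K$. I do not anticipate a serious obstacle beyond this, the guiding idea being to recognise that the weight $\delta^{(N-1)/N}$ is the natural Jacobian produced by the change of variable that converts the $N$-th root on the right-hand side into a linear difference.
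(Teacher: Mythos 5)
Your proof is correct and follows the same core idea as the paper's: substitute $s=\tau^N$ so that the singular weight $\delta^{(N-1)/N}$ appears as the Jacobian $N\tau^{N-1}$, then apply the Banach-valued mean value inequality together with attainment of the supremum of the continuous norm on the compact interval. The only point of divergence is how you establish $|\eta'^{1/N}-\eta^{1/N}|\le C\,|\eta'-\eta|^{1/N}$: the paper argues via l'Hospital that the ratio tends to $0$ on the diagonal and invokes compactness of $K$, while you use subadditivity of the concave $N$-th root on $[0,\infty)$ and a sign split for the mixed case. Your elementary argument is in fact slightly cleaner, since the l'Hospital limit is only computed away from $\eta'=0$ (where the ratio actually tends to $1$ rather than $0$), so that the paper's claim of continuity up to the diagonal is not literally justified near the origin; your subadditivity bound with explicit constant $C=N\cdot 2^{1-1/N}$ covers every case uniformly, and moreover shows the constant depends only on $N$, not on $K$.
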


\begin{proof}
Define $o:= \eta^{\frac{1}{N}} \in \R$ and $o':= \eta'^{\frac{1}{N}} \in \R$ for any $\eta, \eta' \in \R$.
By the mean value theorem in Banach spaces, there exists some
$\delta \in [\eta, \eta']$ (we write $\xi:= \delta^{\frac{1}{N}}$)
\begin{equation}\label{o}
\begin{split}
\| \w(\eta) - \w(\eta')\| &= \| \w(o^N) - \w(o'^N) \| \leq |o-o'|  \left\| \left(\frac{d}{do} \w \right) (\xi) \right\|
\\ &= N |\eta^{\frac{1}{N}} - \eta'^{\frac{1}{N}}| \,  \left\| \delta^{\frac{N-1}{N}} 
\left( \frac{d}{d\eta} \w \right) (\delta) \right\|.
\end{split}
\end{equation}
One computes using l'Hospital for any $N>1$
\begin{align}
\lim_{\eta \to \eta'} \frac{(\eta^{\frac{1}{N}} - \eta'^{\frac{1}{N}})}
{(\eta-\eta')^{\frac{1}{N}}} = \lim_{q\to 1} \frac{q^{\frac{1}{N}}-1}{(q-1)^{\frac{1}{N}}}
=  \lim_{q\to 1} \frac{(q-1)^{1-\frac{1}{N}}}{q^{1-\frac{1}{N}}} = 0.
\end{align}
Consequently, for any $\eta, \eta' \in \R$ contained in a compact convex subset $K\subset \R$ 
and any $N>1$ there exists a uniform constant $C=C(N,K)>0$ such that 
\begin{align}\label{C}
N |\eta^{\frac{1}{N}} - \eta'^{\frac{1}{N}}| \leq C 
|\eta-\eta'|^{\frac{1}{N}}.
\end{align}
Taking Hilbert space norm on both sides of \eqref{o} proves the statement of the lemma
using the estimate \eqref{C}.
\end{proof}

As a consequence of the previous lemma we conclude with the following corollary,
where for simplicity we assume that the edge $B$ as well as the fibre $F$ are one-dimensional. The general 
case is discussed verbatim.

\begin{cor}\label{mean-cor}
Consider any continuously differentiable section 
$\w \in \Gamma (\textup{Sym}^2_0({}^{ie}T^*M))$. Consider two copies 
of local coordinates $(x,y,z), (x',y',z') \in \U$. Consider 
any $\wy \in B$ lying in the same (convex) coordinate chart as $y$ and $y'$. 
Then there exist $\xi\in (0,1)$ lying in the line segment between $x$ and $x'$; 
$\gamma$ lying in the line segment between $y$ and $y'$;
and $\zeta \in F$ lying in the line segment connecting $z, z'\in F$; 
as well as a constant $C>0$ depending only on the choice of local coordinate charts
and the odd integer $N \in \N$, such that
\begin{equation*}
\begin{split}
\frac{\| \w (x,y-\wy,z) - \w (x',y'-\wy,z') \|}{d_M((x,y,z), (x',y',z'))^{\frac{1}{N}} }  
\leq & \, C \left( \| \xi^{\frac{N-1}{N}} \partial_\xi \w (\xi, y, z) \| \right.
\\ & \left. + \, \| \gamma - \wy \|^{\frac{N-1}{N}} \| \partial_\gamma \w (x', \gamma-\wy, z) \| \right.
\\ & \left. + \, \| x'^{-\frac{1}{N}}\partial_\zeta \w (x', y'-\wy, \zeta)\| \right).
\end{split}
\end{equation*}
\end{cor}

\begin{proof}
We write the difference $\w (x,y-\wy,z) - \w (x',y'-\wy,z')$ as follows
\begin{equation*}
\begin{split}
\w (x,y-\wy,z) - \w (x',y'-\wy,z') &= \w (x,y-\wy,z) - \w (x',y-\wy,z)
\\ &+ \w \left(x', y -\wy, z\right) - 
\w \left(x', y' -\wy, z\right) \\ &+ \w (x',y'-\wy,z) - \w (x',y'-\wy,z').
\end{split}
\end{equation*}
As a direct application of Lemma \ref{mean-lemma} we obtain
\begin{equation}\label{mean-thm}
\begin{split}
\| \w (x,y-\wy,z) - \w (x',y'-\wy,z') \| &\leq C\, |x-x'|^{\frac{1}{N}}  \| \xi^{\frac{N-1}{N}} \partial_\xi \w (\xi, y, z) \|
\\ &+ C\, \| y-y'\|^{\frac{1}{N}} \| \gamma - \wy \|^{\frac{N-1}{N}} \| \partial_\gamma \w (x', \gamma-\wy, z) \|
\\ & + C\, \| z-z'\| \, \| \partial_\zeta \w (x', y'-\wy, \zeta)\|.
\end{split}
\end{equation}
for some uniform constant $C>0$. From here the statement of the theorem follows.
\end{proof}

\section{Appendix: Comparison of various H\"older spaces}\label{spaces-comparison}

H\"older spaces on spaces with incomplete edge singularities
have been an important tool in studying K\"ahler-Einstein edge metrics in \cite{JMR},
as well as in the discussion of the Yamabe flow in \cite{BV}. \medskip

We shall provide a brief overview how the spaces here and in \cite{JMR,BV} are related.  
Let us start with the definition of the wedge H\"older spaces 
(of time-independent scalar functions) as in \cite{JMR}.
\begin{defn}\label{JMR1}
The wedge H\"older space $\mathcal{C}^{0,\alpha}_\omega (M), \A\in (0,1),$ consists of functions 
$u(p)$ that are continuous on $\overline{M}$ with finite $\A$-th H\"older 
norm
\begin{align}
\|u\|_{\A}:=\|u\|_{\infty} + \sup \left(\frac{|u(p)-u(p')|}{d_M(p,p')^{\A}}\right) <\infty, 
\end{align}
where the distance function $d_M(p,p')$ between any two points $p,p'\in M$ 
is defined with respect to the incomplete edge metric $g$. The higher order 
wedge H\"older spaces are defined for any order $k\in \N$ by
\begin{equation} 
\mathcal{C}^{k,\alpha}_\omega (M) := \{ u \in \mathcal{C}^{0,\alpha}_\omega (M) \cap C^k(M)
\mid (x^{-1}\V)^j u \in \mathcal{C}^{0,\alpha}_\omega (M) \ \textup{for any} \ j\leq k\}.
\end{equation}
The weighted wedge H\"older spaces are defined for any weight $\gamma \in \R$ by
\begin{equation} 
x^\gamma \mathcal{C}^{k,\alpha}_\omega (M) := \{ x^\gamma u
\mid u \in \mathcal{C}^{k,\alpha}_\omega (M)\}.
\end{equation}
\end{defn} 

\cite{JMR} also introduces the edge H\"older spaces of time-independent scalar functions,
which are defined with respect to a complete edge metric and different derivatives. More precisely, we have the following.
\begin{defn}\label{JMR2}
The edge H\"older space $\mathcal{C}^{0,\alpha}_e (M), \A\in (0,1),$ consists of functions 
$u(p)$ that are continuous on $\overline{M}$ with finite $\A$-th H\"older 
norm
\begin{align}
\|u\|_{\A}:=\|u\|_{\infty} + \sup \left(\frac{|u(p)-u(p')|}{D_M(p,p')^{\A}}\right) <\infty, 
\end{align}
where the distance function $D_M(p,p')$ between any two points $p,p'\in M$ 
is defined with respect to the complete edge metric $x^{-2}g$. The higher order 
edge H\"older spaces are defined for any order $k\in \N$ by
\begin{equation} 
\mathcal{C}^{k,\alpha}_e (M) := \{ u \in \mathcal{C}^{0,\alpha}_e (M) \cap C^k(M)
\mid \V^j u \in \mathcal{C}^{0,\alpha}_e (M) \ \textup{for any} \ j\leq k\}.
\end{equation}
The weighted edge H\"older spaces are defined for any weight $\gamma \in \R$ by
\begin{equation} 
x^\gamma \mathcal{C}^{k,\alpha}_e (M) := \{ x^\gamma u
\mid u \in \mathcal{C}^{k,\alpha}_e (M)\}.
\end{equation}
\end{defn} 

As explained in \cite[\S 2.6.3]{JMR}, the wedge and the edge H\"older spaces of Definitions 
\ref{JMR1} and \ref{JMR2} are related by $\mathcal{C}^{k,\alpha}_\omega (M) \subsetneq 
\mathcal{C}^{k,\alpha}_e (M)$. The present work also features H\"older spaces of time-independent scalar functions
$\hok (M)_\gamma$, that are closely related to the wedge H\"older spaces of Definition \ref{JMR1}.
Namely, we clearly have $\ho(M)_\gamma \equiv x^\gamma \mathcal{C}^{0,\alpha}_\omega (M)$. That
equality does not extend to higher order spaces. however. In fact for any $u \in \hok(M)_0$ we find 
$(x^{-1}\V)^j u \in x^{-j}\mathcal{C}^{0,\alpha}_\omega (M)$. Thus
\begin{equation}
\hok(M)_\gamma = \bigcap_{j=0}^k \, x^{\gamma-j} \mathcal{C}^{j,\alpha}_\omega (M).
\end{equation}
In the work \cite{BV}, we have introduced another type of H\"older spaces, built upon
$\ho(M\times [0,T])$ by requiring H\"older regularity under iterative differentiation by a subset of $x^{-1} \V$.
More precisely, \cite{BV} defines the following.

\begin{defn}\label{BV-def}
Let $\Delta$ denote the Laplace-Beltrami operator of $(M,g)$.
\begin{equation*}
\begin{split}
&\mathcal{C}^{1+\A}_{\textup{ie}} (M\times [0,T]) = 
\{u\in \ho(M\times [0,T]) \mid x^{-1}\mathcal{V}_e u \in \ho(M\times [0,T])\}, \\
&\mathcal{C}^{2+\A}_{\textup{ie}} (M\times [0,T]) = \{u\in \mathcal{C}^{\A}_{\textup{ie}} (M\times [0,T]) 
\mid \Delta u, x^{-1}\mathcal{V}_e u, \partial_t u \in \mathcal{C}^{\A}_{\textup{ie}}(M\times [0,T]) \}, \\ 
&\mathcal{C}^{2k + 1+\A}_{\textup{ie}} (M\times [0,T]) = \{u\in \mathcal{C}^{1+\A}_{\textup{ie}} (M\times [0,T]) \mid \Delta^j u \in 
\mathcal{C}^{1+\A}_{\textup{ie}} (M\times [0,T]) , j\leq k \}, \\
&\mathcal{C}^{2(k+1)+\A}_{\textup{ie}} (M\times [0,T]) = \{u\in \mathcal{C}^{2+\A}_{\textup{ie}} (M\times [0,T]) \mid \Delta^j u \in \mathcal{C}^{2+\A}_{\textup{ie}} (M\times [0,T]) , j\leq k\}.
\end{split}
\end{equation*}
\end{defn}

The restricted set of derivatives that appear in the Definition \ref{BV-def} simplified various
heat kernel estimates in \cite{BV}. The relation to the H\"older spaces above is as follows. 
Extending Definition \ref{JMR1} to include H\"older regularity in time, we find
\begin{equation}\begin{split}
&\mathcal{C}^{1+\A}_{\textup{ie}} (M\times [0,T]) \equiv \mathcal{C}^{1,\alpha}_\omega (M\times [0,T]), \\
&\mathcal{C}^{k+\A}_{\textup{ie}} (M\times [0,T]) \subsetneq \mathcal{C}^{k,\alpha}_\omega (M\times [0,T]), \ k\geq 2.
\end{split}\end{equation}

\def\cprime{$'$}
\providecommand{\bysame}{\leavevmode\hbox to3em{\hrulefill}\thinspace}
\providecommand{\MR}{\relax\ifhmode\unskip\space\fi MR }
\providecommand{\MRhref}[2]{%
  \href{http://www.ams.org/mathscinet-getitem?mr=#1}{#2}
}
\providecommand{\href}[2]{#2}

\end{document}